\documentclass[12pt]{article}
\usepackage[intlimits]{amsmath}
\usepackage{amsfonts,amssymb,amscd,amsthm,cite,hyperref}
\input xy
\xyoption{all}

\usepackage{ifpdf}
\ifpdf 
  \usepackage[pdftex]{graphicx}
  \DeclareGraphicsExtensions{.pdf,.png,.jpg,.jpeg,.mps}
  \usepackage{pgf}
  \usepackage{tikz}
\else 
  \usepackage{graphicx}
  \DeclareGraphicsExtensions{.eps,.bmp}
  \DeclareGraphicsRule{.emf}{bmp}{}{}
  \DeclareGraphicsRule{.png}{bmp}{}{}
  \usepackage{pgf}
  \usepackage{tikz}
  \usepackage{pstricks}
\fi

\setlength{\textheight}{22.5cm} 
\setlength{\textwidth}{17cm}
\setlength{\topmargin}{-1cm}

\oddsidemargin0cm
\parskip0.5ex 

\def\ind{\operatorname{ind}}

\def\rk{\operatorname{rk}}

\newcommand{\im}{\mathop{\rm Im}} 
\newcommand{\coker}{\mathop{\rm coker}}

\def\ch{\operatorname{ch}}
\def\Ell{\operatorname{Ell}}
\def\Mat{\operatorname{Mat}}
\DeclareMathOperator\tr{Tr}

\def\dim{\operatorname{dim}}

\DeclareMathOperator\Mp{Mp}
\DeclareMathOperator\Sp{Sp}
 
\DeclareMathOperator\Ad{Ad}
\DeclareMathOperator\liemp{mp}
\DeclareMathOperator\liesp{sp}

\newtheorem{proposition}{Proposition}
\newtheorem{theorem}{Theorem}

\newtheorem{lemma}{Lemma}

\theoremstyle{definition}
\newtheorem{remark}{Remark}
\newtheorem{definition}{Definition}
\newtheorem{example}{Example}

\def\im{\operatorname{Im}}

\title{An Index Formula for Groups of Isometric Linear Canonical Transformations}

\author{Anton Savin, Elmar Schrohe}

\date{}
\begin{document}

\maketitle

\begin{abstract}
We define a representation of the unitary group $U(n)$ by metaplectic operators acting on $L^2(\mathbb{R}^n)$ and
consider the operator algebra generated by the operators of the representation and pseudodifferential operators of Shubin class. Under suitable conditions, we prove the Fredholm property for elements in this algebra and obtain an index formula.
\end{abstract}

\tableofcontents

\section{Introduction}

Given a representation of a group $G$ on  a space of functions on a manifold $M$, we consider the class of operators equal to   linear combinations of the form
\begin{equation}
\label{gop1}
D=\sum_{g\in G}D_g\Phi_g,
\end{equation}
where the $\Phi_g$ are the operators of the representation, the $D_g$ are pseudodifferential operators on $M$, and we assume that the sum is finite, i.e., only a finite number of $D_g$ is nonzero. 

Operators with shifts (or functional differential operators) are the most widely known examples of operators of the form~\eqref{gop1}. Indeed, suppose that $G$ acts on $M$ by diffeomorphisms $x\mapsto g(x),$ $x\in M,g\in G$. Then we define a representation of $G$ by shift operators $\Phi_g u(x)=u(g^{-1}(x))$. The set of all operators of the form \eqref{gop1} is closed under taking sums and compositions. 
The theory  of $C^*$-algebras was applied to define the notion of ellipticity and to prove the Fredholm property for such operators, see e.g. \cite{AnLe2}; also index formulas were obtained \cite{Ant3,NaSaSt17,SaSchSt2,SaSt30,Per5}. Let us mention  that operators with shifts arise in noncommutative  geometry \cite{Con4,CoDu1,CoMo2,PeRo1,SaSt23},  mechanics \cite{OnTs1,OnSk2,Sku2}, etc.

Recently,  operators of type \eqref{gop1} associated with representations by quantized canonical transformations
on closed manifolds were considered \cite{SaSchSt4,SaSch1}. 
A Fredholm criterion was obtained and an approach to the computation of the index  based on algebraic index theory  was proposed. In a similar vein, an algebraic index theorem was established \cite{GKN}. 
Note that operators associated with quantized canonical transformations arise for example when reducing hyperbolic problems to the boundary~\cite{BaSt1,BolSa1}. 
 
So far, the efforts were limited to the case of compact manifolds.  
In this article, we study operators of type \eqref{gop1} on $\mathbb R^n$ for a particularly 
interesting class of quantized canonical transformations, namely metaplectic operators. 
More precisely, we define a unitary representation of the unitary group $U(n)$ on 
$L^2(\mathbb R^n)$ by metaplectic operators. For a subgroup $G$ of $U(n)$, we consider operators of the form \eqref{gop1}, where the $\Phi_g$ are the metaplectic operators in the representation and the $D_g$ are pseudodifferential operators on $\mathbb{R}^n$ of Shubin type,  see \cite{Shu1} or Section \ref{Sect3}, below, for details.

There are many equivalent definitions of the metaplectic group, see~e.g.~\cite{Ler8,Fol1,deG1}. For instance, it is the group generated by the following three types of operators on $L^2(\mathbb{R}^n)$:
\begin{enumerate}\renewcommand{\labelenumi}{(\roman{enumi})}
\item[$(i)$] $f(x)\longmapsto f(Ax)\sqrt{\det A}$, where $A$ is a real nonsingular $n\times n$ matrix;
\item[$(ii)$] $f(x)\longmapsto f(x)e^{i(Bx,x)}$, where $B$ is a real symmetric $n\times n$ matrix;
\item[$(iii)$] $f(x)\longmapsto \mathcal F(f)(x)$, where $\mathcal{F}$ is the Fourier transform.
\end{enumerate}
Elements of the metaplectic group arise in quantum mechanics as solution operators of nonstationary Schr\"odinger equations with quadratic Hamiltonians \cite{deG1}, also fractional Fourier transforms \cite{BuMa1} are elements of the metaplectic group.

Somewhat surprisingly, the theory becomes rather transparent for this situation. There is a natural notion of ellipticity that implies the Fredholm property.  
Moreover -- and this is the main result in this article -- we obtain 
an index formula valid for all groups $G\subset U(n)$ of polynomial growth in the sense of Gromov~\cite{Gro1}.

This index formula represents the Fredholm index  as a sum of contributions over conjugacy classes in $G$, cf.~\cite{NaSaSt17}. Each contribution is defined in the framework of noncommutative geometry using a certain closed twisted trace (cf.~\cite{CoMo2,Fds15}). The proof of the index formula itself is based on two facts. 
First, the standard index one Euler operator on $\mathbb{R}^n$, defined in terms of the creation and annihilation operators, see~\cite{HKT1}, is actually equivariant with respect to the action of $U(n)$ by metaplectic transformations.  
Second, this operator can be used to derive an equivariant Bott periodicity in the following form 
\begin{equation}\label{eq-bp1}
K_*(C_0(\mathbb{C}^n)\rtimes G)\simeq K_*(C^*G),
\end{equation}
where now $G\subset U(n)$ is an arbitrary subgroup, $C^*G$ is the maximal group $C^*$-algebra of $G$,  
$C_0(\mathbb{C}^n)\rtimes G$ is the maximal $C^*$-crossed product associated with the natural action of $G\subset U(n)$ on $\mathbb{C}^n$ and $K_*$ stands for the $K$-theory of  $C^*$-algebras. Note that the isomorphism~\eqref{eq-bp1}
first appeared in~\cite{HKT1} in terms of $\mathbb{Z}/2$-graded $C^*$-algebras. Here we define this isomorphism in terms of symbols of elliptic operators and give an independent proof of the periodicity isomorphism.
The isomorphism~\eqref{eq-bp1} enables us to reduce the proof of the index formula to the special case of the Euler operator twisted by projections over $C^*G$, where a direct computation of both sides of the index formula is possible.  

{\bf Acknowledgment}.  
We thank Gennadi Kasparov for pointing out the Bott periodicity theorem in~\cite{HKT1} to us. The work of the first author was partly supported  by RUDN University program 5-100; that of the second by DFG through project SCHR 319/8-1.

\section{Isometric Linear Canonical Transformations and Their Quantization}

Let us recall the necessary facts about the  symplectic  and metaplectic groups from \cite{Ler8}, see also \cite{RoRa1,RoSa1}.

\paragraph{The symplectic and the metaplectic groups and their Lie algebras.}

The metaplectic group $\Mp(n)\subset \mathcal{B}L^2(\mathbb{R}^n)$ is the group generated by unitary operators of the form 
$$
\exp(-i\widehat H)\in \Mp(n),
$$
where $\widehat H$ is the Weyl quantization of a homogeneous real quadratic Hamiltonian
$H(x,p)$, $(x,p)\in T^*\mathbb{R}^n$. In its turn, the complex metaplectic group $\Mp^c(n)$ is similarly
generated by unitaries associated with Hamiltonians $H(x,p)+c$, where $H(x,p)$ is as above, while $c$ is a real constant.

The symplectic group $\Sp(n)\subset GL(2n,\mathbb{R}) $ is the group of linear canonical transformations\footnote{i.e., linear transformations that preserve the symplectic form $dx\wedge dp$.} of $T^*\mathbb{R}^n\simeq \mathbb{R}^{2n}$. 
We consider the faithful  representation of this group on $L^2(\mathbb{R}^{2n})$ by shift operators $u(x,p)\mapsto u(A^{-1}(x,p)),$ where $u\in L^2(\mathbb{R}^{2n})$ and $A\in \Sp(n)$ and identify $\Sp(n)$ with its image in $\mathcal{B}L^2(\mathbb{R}^{2n})$ under this representation. One can show that this group is  
generated by the unitary shift operators 
$$
\exp\left(-\left(H_p \frac\partial{\partial x} -H_x \frac\partial{\partial p}\right)\right)\in \Sp(n) 
$$
associated with  the canonical transformation equal to the evolution operator for time $t=1$ of the Hamiltonian system 
$$
 \dot x=H_p,\quad \dot p=-H_x,
$$ 
where $H(x,p)$ is a homogeneous real quadratic Hamiltonian as above.

It is well known that $\Mp(n)$ is a nontrivial double covering of $\Sp(n)$. The projection takes a metaplectic operator to the corresponding canonical transformation. Hence, their Lie algebras, denoted by $\liemp(n)$ and $\liesp(n)$ are isomorphic. Let us describe an explicit isomorphism. Indeed, it follows from the definitions above that
$$
\liemp(n)=\{-i\widehat{H}\},\qquad \liesp(n)=\{-(H_p \partial/\partial x -H_x \partial/\partial p)\}
$$
with $H$ as above and Lie brackets equal to the operator commutators.  These Lie algebras  are  isomorphic   and they are also isomorphic to the Lie algebra of homogeneous real quadratic Hamiltonians $H(x,p)\in \mathbb{R}^{2n^2+n}$ 
\begin{equation}\label{eq-lie2}
\begin{array}{ccccc}
    \liemp(n) & \simeq & \liesp(n) & \simeq & \mathbb{R}^{2n^2+n}\vspace{1mm}\\
 -i\widehat H  & \leftrightarrow & -\left(H_p \frac\partial{\partial x} -H_x \frac\partial{\partial p}\right) & \leftrightarrow & H(x,p),
\end{array}
\end{equation}
where we consider the Poisson bracket on the space of Hamiltonians 
$$
\{H',H''\}=H'_xH''_p-H'_pH''_x.
$$
The fact that the isomorphisms in \eqref{eq-lie2} preserve the Lie algebra structures is proved by a direct computation.

\paragraph{Isometric linear canonical transformations and their quantization.} 
In what follows, we consider the maximal compact subgroup $\Sp(n)\cap O(2n)$ of isometric linear canonical transformations in $\Sp(n)$. 
It is well known that this intersection is isomorphic to the unitary group $U(n)$ if we introduce the complex structure on $T^*\mathbb{R}^n\simeq \mathbb{C}^n$ via $(x,p)\mapsto z= p+ix$, see~\cite{Arn1}. 

If we realize $\liesp(n)$ in terms of Hamiltonians $H(x,p)$ as in~\eqref{eq-lie2}, then one can show that the Lie algebra of the subgroup $\Sp(n)\cap O(2n)\subset \Sp(n)$ consists of the Hamiltonians 
\begin{equation}
\label{eq-hama1}
 H(x,p)=\frac 1 2 (x,p)\left(
                  \begin{array}{cc}
                    A & -B\\
                    B & A\end{array}\right)\left(  \begin{array}{c}   x\\  p\end{array}\right),
\end{equation}
where $A$ and $B$ are real $n\times n$ matrices with $A$  symmetric and $B$ skew-symmetric. Moreover, we have the isomorphism of Lie algebras 
\begin{equation}\label{liealg2}
\begin{array}{ccc}
   \text{Lie algebra of }\Sp(n)\cap O(2n)\subset \Sp(n)& \longrightarrow &u(n) \vspace{2mm}\\
   H(x,p)={\displaystyle \frac 1 2}(x,p)\left(
                  \begin{array}{cc}
                    A & -B\\
                    B & A\end{array}\right)\left(  \begin{array}{c}   x\\  p\end{array}\right)  & \longmapsto & B+iA.
\end{array}
\end{equation}
Here $u(n)$ stands for the Lie algebra of $U(n)$;   its elements are the skew-Hermitian matrices.
Let us illustrate the isomorphism~\eqref{liealg2} in examples.
\begin{example}
If $n=1$, then $B=0$ and $A= \varphi$, $\varphi\in \mathbb{R} $,  and \eqref{eq-hama1} gives  Hamiltonians
$$
H(x,p)=\frac 1 2(x^2+p^2)\varphi.
$$
The solution of the corresponding Hamiltonian system of equations 
$$
\dot x=\varphi p,\; \dot p=-\varphi x; \quad x(0)=x_0,\; p(0)=p_0
$$
satisfies
$$
p(t)+ix(t)=e^{i\varphi t}(p_0+ix_0).
$$ 
For $t=1$ we obtain the element  
$e^{i\varphi}\in U(1)$, obviously equal to the exponential mapping of $i\varphi\in u(1)$. On the other hand, \eqref{liealg2} gives the same element  $B+iA=i\varphi\in u(1)$. 
\end{example}
\begin{example}
If $n=2$, then $A=\left(
                  \begin{array}{cc}
                    k & m\\
                    m & l\end{array}\right)$, $B=\left(
                  \begin{array}{cc}
                    0 & -t\\
                    t & 0\end{array}\right)$ 
and there are four linearly independent Hamiltonians:
$$
x_1^2+p_1^2,\quad  x_2^2+p_2^2,\quad   x_1x_2+p_1p_2,\quad x_1p_2-x_2p_1.
$$
Let us consider  for instance the Hamiltonian
$$
H(x,p) =( x_1p_2-x_2p_1)\varphi=\frac 1 2 (x,p)\left(
                  \begin{array}{cc}
                    A & -B\\
                    B & A\end{array}\right)\left(  \begin{array}{c}   x\\  p\end{array}\right)\varphi,\qquad \varphi\in\mathbb{R},
$$
where $A=0$, $B=\left( \begin{array}{cc}  0& -1\\    1 & 0\end{array}\right)$.
On the one hand, the Hamiltonian system of equations 
$$  
\dot x_1=-\varphi x_2,\; \dot x_2=\varphi x_1,\; \dot p_1=-\varphi p_2,\; \dot p_2=\varphi p_1; \quad x(0) = x_0, p(0) = p_0,
$$
has the solution equal to 
$$
x(t)=e^{B\varphi t}x_0,
 \quad p(t)=e^{B\varphi t}p_0,\quad\text{where }e^{B\varphi t} =\begin{pmatrix}
   \cos (\varphi t) & -\sin(\varphi t)\\
   \sin(\varphi t) & \cos (\varphi t)
 \end{pmatrix}
$$ 
For $t=1$ we therefore obtain
$$
p+ix=e^{B\varphi}(p_0+ix_0).  
$$ 
Then the element $e^{B\varphi}\in U(2)$ is obviously equal to the exponential mapping of $B\varphi\in u(2)$. On the other hand, \eqref{liealg2} gives the same element   $(B+iA)\varphi=B\varphi \in o(2)\subset u(2)$.  
\end{example}

The following lemma will be useful below.
\begin{lemma}\label{lem6}  $U(n)$ is generated by the orthogonal subgroup $O(n)$ and the subgroup $U(1)=\{{\rm diag}(z,1,\ldots,1)\;|\; |z|=1\}$ .
\end{lemma}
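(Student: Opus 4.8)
The plan is to show that the subgroup $G\subset U(n)$ generated by $O(n)$ together with the circle $U(1)=\{\operatorname{diag}(z,1,\dots,1):|z|=1\}$ is all of $U(n)$. The natural strategy is to reduce an arbitrary unitary to a simpler normal form by multiplying on both sides by elements of $G$, exploiting the fact that conjugating the distinguished $U(1)$ by orthogonal matrices produces circles acting on \emph{arbitrary} coordinate lines, and more generally that products of such circles already fill out the diagonal torus.

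First I would check that $G$ contains the full diagonal torus $T^n=\{\operatorname{diag}(z_1,\dots,z_n):|z_j|=1\}$. Conjugating $\operatorname{diag}(z,1,\dots,1)$ by the permutation matrix (which lies in $O(n)$) that swaps the first and $j$-th basis vectors gives $\operatorname{diag}(1,\dots,z,\dots,1)$ with $z$ in the $j$-th slot; multiplying these together for $j=1,\dots,n$ yields every element of $T^n$. Hence $T^n\subset G$ and $O(n)\subset G$.

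Next I would invoke a standard factorization of a unitary matrix in terms of these two pieces. The cleanest route is the \emph{singular value / $CS$-type decomposition} or, more elementarily, the observation that $U(n)=O(n)\cdot T^n\cdot O(n)$: given $U\in U(n)$, write $U=X+iY$ with $X,Y$ real; then $U^*U=I$ forces $X^TX+Y^TY=I$ and $X^TY=Y^TX$, so $X^TY$ is symmetric and can be orthogonally diagonalized, and by a simultaneous orthogonal change of basis on the left and right one brings $U$ to the block-diagonal form with $2\times 2$ rotation-type blocks $\begin{pmatrix}\cos\theta_k & \sin\theta_k\\ -\sin\theta_k & \cos\theta_k\end{pmatrix}$ interpreted via $p+ix$, i.e.\ to a diagonal unitary. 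Concretely: there exist $P,Q\in O(n)$ with $PUQ=\operatorname{diag}(e^{i\theta_1},\dots,e^{i\theta_n})\in T^n$. Since $P,Q\in O(n)\subset G$ and the right-hand side lies in $T^n\subset G$, we get $U=P^{-1}(PUQ)Q^{-1}\in G$, so $G=U(n)$.

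The main obstacle is establishing the decomposition $U(n)=O(n)\cdot T^n\cdot O(n)$ cleanly; once that is in hand the argument is immediate. I would prove it via the following linear-algebra lemma: for any $U\in U(n)$ there are $P,Q\in O(n)$ such that $PUQ$ is diagonal. This follows because the real and imaginary parts $X,Y$ of $U$ satisfy $X^TY=Y^TX$ (symmetry) and $XX^T+YY^T=I$ as well, so one may simultaneously orthogonally "rotate" rows and columns to make $U$ diagonal — equivalently, apply the QS (or Autonne–Takagi-type) normal form. Since the details are routine linear algebra, I will only sketch them and then conclude as above.
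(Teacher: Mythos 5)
Your proposal reaches the right conclusion but by a genuinely different route from the paper's. The paper argues at the Lie-algebra level: since $u(n)$ is the set of matrices $B+iA$ with $A$ real symmetric and $B$ real skew-symmetric, and $o(n)$ already supplies all the $B$'s, it suffices to produce the symmetric part $iA$ from the $\Ad_{O(n)}$-orbit of the Lie algebra of $U(1)$; this is done directly by conjugating $iE_{11}$ by permutation matrices (yielding all diagonal $iA$) and then by $\pi/4$-rotations in two-dimensional coordinate planes (yielding the off-diagonal symmetric elements). You argue globally, via the factorization $U(n)=O(n)\cdot T^n\cdot O(n)$, after first generating the torus $T^n$ by conjugating the given circle by permutation matrices. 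Both approaches are valid. The paper's is shorter and more self-contained; yours is more structural but rests on the $KAK$ (Cartan) decomposition for the symmetric space $U(n)/O(n)$, which is a standard but not entirely trivial result. In particular, the step you defer — that from $X^TY=Y^TX$, $XY^T=YX^T$, $X^TX+Y^TY=XX^T+YY^T=I$ one can choose $P,Q\in O(n)$ making $PXQ$ and $PYQ$ simultaneously diagonal — is the real content of that decomposition: it amounts to a simultaneous singular value decomposition of $X$ and $Y$, with some care when singular values coincide, and it does not follow just from the observation that $X^TY$ is symmetric and orthogonally diagonalizable. In a complete write-up this step should be either cited or proved in detail.
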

\begin{proof}
It suffices to prove that the Lie algebra of $U(n)$ is generated as a vector space by the Lie algebra  of $O(n)$  
and the action of the adjoint representation $\Ad_{O(n)}$ on the Lie algebra of $U(1)$.

Indeed, $u(n)$  
is the set of all matrices $B+iA$, where $A$ is symmetric and $B$ is skew-symmetric. Since $o(n)$  
consists of all skew-symmetric matrices, it suffices to show that the set of all $iA$ is generated by $\Ad_{O(n)}$ of the Lie algebra of $U(1)$. This is straightforward: We first generate diagonal matrices using permutation matrices and then generate nondiagonal matrices using rotations by $\pi/4$ in two-dimensional planes.
\end{proof}

\paragraph{The homomorphism  $ R: U(n) \to \Mp^c(n)$.}

It is known that $\pi:\Mp(n)\to\Sp(n)$ is a nontrivial double covering. Thus, one can not represent unambigously elements of $\Sp(n)$
by metaplectic operators. However, it turns out that one can define  a representation of the unitary subgroup $U(n)\subset \Sp(n)$ by operators in the complex metaplectic group.

\begin{proposition}
Consider the mapping 
\begin{equation}\label{secc1}
  \begin{array}{ccc}
    R: U(n) & \longrightarrow & \Mp^c(n)\vspace{2mm}\\
     s & \longmapsto & \pi^{-1}(s)\sqrt{\det s},
  \end{array}
\end{equation}
defined in a neighborhood of the unit element $I$  in $U(n)$, where $\pi^{-1}$ is the section for $\pi:\Mp(n)\to\Sp(n)$ such that $\pi^{-1}(I)=I$ and the branch of the square root is chosen such that $\sqrt{1}=1$.  Then the mapping \eqref{secc1} extends to the entire group $U(n)$ as a monomorphism of groups. In terms of Hamiltonians, the homomorphism \eqref{secc1}  is defined explicitly as follows. Given
$$
 H(x,p)=\frac 1 2(x,p)\left(
                  \begin{array}{cc}
                    A & -B\\
                    B & A\end{array}\right)
\left(  \begin{array}{c}   x\\  p\end{array}\right),
$$
where $A$ is symmetric and $B$ is skew-symmetric, we have
\begin{equation}\label{eq-quant1}
R\left(\exp( B+iA)\right)=
\exp(-i\widehat{H})\sqrt{\det(\exp(B+iA))}=\exp(-i\widehat{H})\exp(i\tr A/2),
\end{equation}
where $\widehat{H}$ is the Weyl quantization of $H(x,p)$.
\end{proposition}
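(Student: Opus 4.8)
The plan is to realize $R$ as the integral of an explicit Lie algebra homomorphism, isolating the metaplectic sign anomaly in the spectrum of the harmonic oscillator. First I would pass to Lie algebras. Writing $\liemp^c(n)=\liemp(n)\oplus i\mathbb{R}\cdot I$, with the second summand spanned by the quantizations $-ic$ of constant Hamiltonians, I would set
\[
  r\colon u(n)\longrightarrow \liemp^c(n),\qquad r(B+iA)=-i\widehat H+\tfrac12\tr(B+iA),
\]
where $H=H(x,p)$ is the quadratic Hamiltonian \eqref{eq-hama1} attached to $A$ symmetric and $B$ skew-symmetric. By \eqref{liealg2} followed by \eqref{eq-lie2} the assignment $B+iA\mapsto-i\widehat H$ is a Lie algebra isomorphism of $u(n)$ onto $\liemp(n)$; the remaining term lands in the abelian ideal $i\mathbb{R}\cdot I$ and is a homomorphism because $\tr$ annihilates commutators, so $r$ is an injective Lie algebra homomorphism. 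Since $\tr B=0$ we have $\tfrac12\tr(B+iA)=\tfrac i2\tr A$, whence $\exp\bigl(\tfrac12\tr(B+iA)\bigr)=\exp(i\tr A/2)=\sqrt{\det\exp(B+iA)}$ with the branch $\sqrt1=1$; this already yields the second equality in \eqref{eq-quant1}.

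The key step is integration and descent. As $\widetilde{U(n)}$ is simply connected, $r$ integrates to a homomorphism $\widetilde R\colon\widetilde{U(n)}\to\Mp^c(n)$ with $\widetilde R(\exp X)=\exp(r(X))$, and to descend it to $U(n)$ I must check that $\widetilde R$ is trivial on $\ker(\widetilde{U(n)}\to U(n))\cong\pi_1(U(n))\cong\mathbb{Z}$. A generator is represented by the loop $t\mapsto\exp(tX_0)$, $0\le t\le1$, with $X_0=\operatorname{diag}(2\pi i,0,\dots,0)\in u(n)$, so it suffices to compute $\exp(r(X_0))$: here $A=\operatorname{diag}(2\pi,0,\dots,0)$, $B=0$, $H=\pi(x_1^2+p_1^2)$ and $-i\widehat H=-2\pi i\bigl(N_1+\tfrac12\bigr)$ with $N_1$ the number operator in the variable $x_1$ (spectrum $\{0,1,2,\dots\}$), so that $\exp(-i\widehat H)=e^{-2\pi iN_1}e^{-i\pi}=-I$, while $\exp\bigl(\tfrac12\tr X_0\bigr)=e^{i\pi}=-1$; the two signs cancel and $\exp(r(X_0))=(-I)(-1)=I$. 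Hence $\widetilde R$ descends to a homomorphism $R\colon U(n)\to\Mp^c(n)$, and since $-i\widehat H$ commutes with the scalar $\tfrac12\tr(B+iA)I$ we obtain $R(\exp(B+iA))=\exp(-i\widehat H)\exp(i\tr A/2)$; as $\exp$ is onto the compact connected group $U(n)$, this is precisely \eqref{eq-quant1}.

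Finally I would match $R$ with the local formula \eqref{secc1} and prove injectivity. For $X$ near $0$ the operator $\exp(-i\widehat H)\in\Mp(n)$ lies near $I$ and, by \eqref{eq-lie2} and \eqref{liealg2}, projects under $\pi$ to the canonical transformation corresponding to $\exp(B+iA)\in U(n)\subset\Sp(n)$, hence $\exp(-i\widehat H)=\pi^{-1}(\exp(B+iA))$ for the section with $\pi^{-1}(I)=I$; together with the first step this gives $R(s)=\pi^{-1}(s)\sqrt{\det s}$ near $I$, so $R$ is the asserted extension, and it is the unique one since $U(n)$ is connected. For injectivity, $\ker R$ is discrete (because $r$ is injective), hence central, hence contained in $Z(U(n))=\{e^{i\theta}I\}$; the same computation shows $R(e^{i\theta}I)$ multiplies the Hermite function indexed by $k\in\mathbb{N}_0^n$ by $e^{-i\theta(|k|+n/2)}e^{in\theta/2}=e^{-i\theta|k|}$, which equals $1$ for all $k$ only if $e^{-i\theta}=1$; thus $\ker R=\{I\}$ and $R$ is a monomorphism.

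The only non-formal point is the sign cancellation in the descent step (equivalently, the eigenvalue computation for injectivity): the global obstruction to well-definedness of $s\mapsto\pi^{-1}(s)\sqrt{\det s}$, namely the nontriviality of the cover $\Mp(n)\to\Sp(n)$, is exactly compensated by the branch of $\sqrt{\det}$ along the generating loop of $\pi_1(U(n))$, and the reason the two signs agree is the half-integer shift $\tfrac12(\widehat x_1^2+\widehat p_1^2)=N_1+\tfrac12$ in the harmonic oscillator spectrum. Everything else is bookkeeping with the identifications \eqref{eq-lie2} and \eqref{liealg2}.
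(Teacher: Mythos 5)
Your proposal is correct and reaches the same key computation as the paper, but is organized differently. The paper defines $R$ by the formula $\pi^{-1}(s)\sqrt{\det s}$ near $I$, continues it along paths, and verifies well-definedness by checking that the monodromy around the generator of $\pi_1(U(n))$ is trivial: $\pi^{-1}(s(2\pi))=-I$ from the spectrum $\{k+\tfrac12\}$ of the harmonic oscillator, and $\sqrt{\det s(2\pi)}=-1$, so the signs cancel. You instead isolate the Lie algebra homomorphism $r(B+iA)=-i\widehat H+\tfrac12\tr(B+iA)$, integrate it over the simply connected cover $\widetilde{U(n)}$, and verify descent to $U(n)$ by the very same harmonic oscillator sign cancellation evaluated on the generator $X_0=\operatorname{diag}(2\pi i,0,\dots,0)$ of the kernel of $\widetilde{U(n)}\to U(n)$. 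The two arguments are equivalent — your descent condition is precisely the paper's monodromy condition — but your packaging makes the homomorphism property of the extension automatic, whereas the paper leaves implicit that a single-valued continuation of a local homomorphism on a connected group is a global homomorphism. You also add a verification of injectivity (discrete kernel is central, then a Hermite basis computation on $Z(U(n))=\{e^{i\theta}I\}$), which the paper asserts in the statement but does not argue in its proof; this is a welcome completion. One small remark: the equality $R(\exp(B+iA))=\exp(r(B+iA))$ that you use to pass from the generator of $\pi_1$ to the generator of $\ker(\widetilde{U(n)}\to U(n))$ holds because $-i\widehat H$ commutes with the central scalar, exactly as you note; it is worth being explicit (as you are) that this, together with surjectivity of $\exp$ on the compact connected $U(n)$, is what upgrades the formula from a neighborhood of $I$ to all of $U(n)$.
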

\begin{proof}
Clearly, this mapping is well defined in a neighborhood of the identity and admits a unique continuation along any continuous path $s(t)$ in $U(n)$, $s(0)=I$ (since this is true for both $\pi^{-1}(s)$ and $\sqrt{\det s}$). Moreover, the result is the same for two homotopic paths with endpoints  fixed. Therefore, to prove that the mapping \eqref{secc1}
is well defined globally, it suffices to check that the continuation along the generators of $\pi_1(U(n))$
gives the same result as the continuation along the constant path. 

It is well known that $ \pi_1(U(n))\simeq \pi_1(U(1))=\mathbb{Z}$ and  a generator is given by the path $s(t)$ equal to rotations in the  $(x_1,p_1)$-plane by angles $t\in[0,2\pi]$. Then we have  
$$
\pi^{-1}(s(t))=e^{-it\widehat{H}},\qquad \widehat{H}=\frac 1 2\left(-\frac{\partial^2}{\partial x_1^2}+x_1^2\right).
$$
Since the spectrum of the harmonic oscillator $\widehat{H}$  is  $\{1/2+k\mid k\in \mathbb N_0\}$,  we see that
$$
\pi^{-1}(s(0))=I,\qquad \pi^{-1}(s(2\pi))=-I.
$$
On the other hand,   $s(t)$ is the diagonal matrix with entries $e^{it},1,...,1$. Hence,
we have
$$
(\det (s (t)))^{1/2}=e^{it/2},
$$
and we see that
$$
R_{s(2\pi)}=\pi^{-1}(s(2\pi))(\det (s (2\pi)))^{1/2}=-I \cdot (-1)=I =R_{s(0)}.
$$
This implies the desired continuity and also smoothness.

Finally, \eqref{eq-quant1} follows from \eqref{secc1} and the fact that the section $\pi^{-1}$ is equal to
$$
\pi^{-1}(\exp( B+iA))=\exp(-i\widehat{H}),
$$
where $\widehat{H}$ is the Weyl quantization of \eqref{liealg2}.
\end{proof}

\section{Elliptic Operators}\label{Sect3}
 
\paragraph{Shubin type pseudodifferential operators.}
We call a smooth function $d=d(x,p)$ on $T^*\mathbb R^n$ a
pseudodifferential symbol (of Shubin type) of order $m\in \mathbb R$, provided its derivatives satisfy the estimates 
$$|D^\alpha_p D^\beta_x d(x,p) |\le c_{\alpha,\beta}(1+|x|+|p|)^{m- |\alpha|- |\beta|}$$
for all multi-indices $\alpha$, $\beta$, with suitable constants $c_{\alpha, \beta}$. 
We moreover assume $d$ to be classical, i.e. $d$ admits an asymptotic expansion $d\sim \sum_{j=0}^\infty
d_{m-j}$, where each $d_{m-j}$ is a symbol of order $m-j$, which is (positively) homogeneous in $(x,p)$ for $|x,p|\ge 1$.

We denote by $\Psi(\mathbb{R}^n)$ the norm closure of the algebra of pseudodifferential operators with Shubin type symbols of order zero   acting on $L^2(\mathbb{R}^n)$, see also \cite{Shu1}. This closure is a $C^*$-subalgebra in $\mathcal{B}(L^2(\mathbb{R}^n))$. The symbol mapping in this situation is the homomorphism
$$
  \begin{array}{ccc}
  \sigma: \Psi(\mathbb{R}^n)& \longrightarrow &  C(\mathbb{S}^{2n-1})\vspace{1mm}\\
D & \longmapsto & \sigma(D)(x,p)
  \end{array}
$$
of $C^*$-algebras, induced by the map which associates to a zero order pseudodifferential operator $D$ with symbol $d\sim \sum_{j=0}^\infty d_{-j}$ the 
restriction of $d_0$ to $\mathbb S^{n-1}$. 
Denoting by $\mathcal K(L^2(\mathbb R^n)) $ the compact operators in $\mathcal B(L^2(\mathbb R^n)) $ we have a short exact sequence 
\begin{eqnarray}\label{ses} 
 0\longrightarrow \mathcal K(L^2(\mathbb R^n)) \longrightarrow \Psi(\mathbb R^n) \stackrel \sigma \longrightarrow C(\mathbb S^{n-1})\longrightarrow 0.
\end{eqnarray}
  
The unitary group $U(n)$ acts on $\Psi(\mathbb{R}^n)$ by conjugation with  metaplectic transformations:
$$
D\in \Psi(\mathbb{R}^n), g\in U(n) \longmapsto R_g D {R}_g^{-1}\in \Psi(\mathbb{R}^n).
$$ 
Moreover, we have an analogue of Egorov's theorem:
$$
 \sigma({R}_g D {R}_g^{-1})={g^{-1}}^*\sigma(D). 
$$

Given a discrete group $G\subset U(n)$, we consider the maximal crossed product  $ \Psi(\mathbb{R}^n)\rtimes  G$  
(for the theory of crossed products, see e.g.~\cite{Ped1,Wil1}).  In the sequel, elements of the crossed product are treated as collections $\{D_g\}_{g\in G}$ of pseudodifferential operators $D_g$. We have a natural representation
\begin{equation}\label{quant3}
 \begin{array}{ccc}
    \Psi(\mathbb{R}^n)\rtimes  G & \longrightarrow & \mathcal{B}(L^2(\mathbb{R}^n)) \vspace{1mm}\\
     \{D_g\} & \longmapsto & \sum_{g\in G} D_g {R}_g.
 \end{array}
\end{equation}
This representation is well defined by the universal property of  the maximal $C^*$-crossed products
and the fact that all operators $R_g$ are unitary.

\paragraph{Operators acting between ranges of projections.}
We next introduce a class of operators that is an analogue of operators acting in sections of vector bundles, cf.~ \cite[Sec.~2.2]{NaSaSt17}.  
Namely, we consider triples  $(D,P_1,P_2)$, where $P_1,P_2$ are $N\times N$ matrix projections over the maximal  group $C^*$-algebra denoted by $C^*(G)$ and $D$ is an $N\times N$ matrix operator over  $\Psi(\mathbb{R}^n)\rtimes G$.  Let us also suppose that $D$ and $P_1,P_2$ are compatible in the sense of the following equality:
$$
  D=P_2DP_1.
$$
If this equality is not satisfied, then we replace  $D$ by $P_2DP_1$.
To any such triple, we assign the operator
\begin{equation}\label{mu-op1}
D:\im P_1\longrightarrow \im P_2,\qquad  \im P_1,\im P_2\subset L^2(\mathbb{R}^n,\mathbb{C}^N),
\end{equation}
called {\em $G$-operator}, where $D,P_1,P_2$ are represented as operators on $L^2(\mathbb{R}^n,\mathbb{C}^N)$ using formula \eqref{quant3}, while $\im P_1,\im P_2$ are the ranges of the projections.  

\paragraph{Ellipticity and Fredholm property.} 
Let us recall the notion of ellipticity in this situation (see~\cite[Sec.~2.2]{NaSaSt17}). 
The symbol homomorphism $\sigma: \Psi(\mathbb{R}^n)\to C(\mathbb{S}^{2n-1})$ induces the symbol homomorphism of the maximal crossed products:
$$
\begin{array}{ccc}
\sigma:  \Psi(\mathbb{R}^n)\rtimes G  & \longrightarrow &  C(\mathbb{S}^{2n-1})\rtimes G\\
\{D_g\} & \longmapsto & \{\sigma(D_g)\}. 
\end{array}
$$

\begin{definition}\label{def-ell1}
A triple $\mathcal{D}=(D,P_1,P_2) $  is {\em elliptic} if there exists an element $r\in\Mat_N(C(\mathbb{S}^{2n-1})\rtimes G)$ such that the following equalities hold
\begin{equation}\label{eq-ell3}
P_1 r  \sigma(D) =P_1,  \quad \sigma(D)    r  P_2 =P_2.
\end{equation}
\end{definition}

\begin{lemma}
Elliptic elements have the Fredholm property. 
\end{lemma}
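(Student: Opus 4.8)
The plan is to produce a two-sided parametrix modulo compact operators for the $G$-operator $D\colon\im P_1\to\im P_2$ of \eqref{mu-op1}; the Fredholm property then follows because a bounded operator that is invertible modulo compacts is Fredholm. The key point is that, after passing to crossed products, the kernel of the symbol homomorphism consists exactly of operators that act compactly on $L^2(\mathbb R^n,\mathbb C^N)$.

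First I would promote the short exact sequence \eqref{ses} to the level of crossed products. Both $\mathcal K(L^2(\mathbb R^n))$ and $\Psi(\mathbb R^n)$ are invariant under the $U(n)$-action by conjugation with metaplectic operators, so $G$ acts on all three terms, and since the maximal crossed product functor is exact we obtain a short exact sequence
\[
0\longrightarrow \mathcal K(L^2(\mathbb R^n))\rtimes G\longrightarrow \Psi(\mathbb R^n)\rtimes G\stackrel{\sigma}{\longrightarrow} C(\mathbb S^{2n-1})\rtimes G\longrightarrow 0,
\]
and likewise after passing to $\Mat_N$ of each algebra. Next I would note that the representation \eqref{quant3} maps $\mathcal K(L^2(\mathbb R^n))\rtimes G$ into $\mathcal K(L^2(\mathbb R^n))$: a finitely supported family $\{K_g\}$ of compact operators is sent to the finite sum $\sum_g K_g R_g$, which is compact since the $R_g$ are bounded; as such families are dense in the crossed product and the compacts are norm closed, the claim follows. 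Combining the two observations, any $A\in\Mat_N(\Psi(\mathbb R^n)\rtimes G)$ with $\sigma(A)=0$ acts, via \eqref{quant3}, as a compact operator on $L^2(\mathbb R^n,\mathbb C^N)$. Finally I record that $C^*(G)$ embeds into $\Psi(\mathbb R^n)\rtimes G$ and into $C(\mathbb S^{2n-1})\rtimes G$ through the respective units, that $\sigma$ restricts to the identity on $C^*(G)$, so $\sigma(P_j)=P_j$ for $j=1,2$, and that the compatibility $D=P_2DP_1$ then gives $P_2D=D=DP_1$ and hence $P_2\sigma(D)=\sigma(D)=\sigma(D)P_1$.

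With this in hand, choose $r\in\Mat_N(C(\mathbb S^{2n-1})\rtimes G)$ as in Definition~\ref{def-ell1} and lift it, using surjectivity of $\sigma$ on the crossed products, to some $R\in\Mat_N(\Psi(\mathbb R^n)\rtimes G)$ with $\sigma(R)=r$; put $R'=P_1RP_2$, so that $R'=R'P_2$. Using $P_2D=D=DP_1$ we get $R'D=P_1RD$ and $DR'=DRP_2$, and the ellipticity relations \eqref{eq-ell3} give
\[
\sigma(R'D)=P_1\,r\,\sigma(D)=P_1,\qquad \sigma(DR')=\sigma(D)\,r\,P_2=P_2.
\]
Hence $R'D-P_1$ and $DR'-P_2$ lie in $\ker\sigma$ and are therefore compact on $L^2(\mathbb R^n,\mathbb C^N)$. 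Since $R'D=P_1RDP_1$ has range in $\im P_1$ and vanishes on $\ker P_1$, while $P_1$ restricts to the identity on $\im P_1$, restriction to $\im P_1$ yields $R'D|_{\im P_1}=\mathrm{id}_{\im P_1}+(\text{compact})$; similarly $DR'|_{\im P_2}=\mathrm{id}_{\im P_2}+(\text{compact})$. Thus $R'|_{\im P_2}\colon\im P_2\to\im P_1$ is a two-sided parametrix modulo compacts for $D\colon\im P_1\to\im P_2$, which is consequently Fredholm.

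The step I expect to be the main obstacle is the implication ``$\sigma(A)=0\Rightarrow A$ compact''. It relies on exactness of the \emph{maximal} crossed product, which is what guarantees that $\ker\sigma$ is precisely $\mathcal K(L^2(\mathbb R^n))\rtimes G$ and nothing larger, together with the density argument that places the image of this ideal under \eqref{quant3} inside the compacts. Everything else is bookkeeping with the projections $P_1,P_2$ and with $N\times N$ matrices over the algebras involved.
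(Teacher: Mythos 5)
Your proof is correct and follows the same route as the paper: use exactness of the maximal crossed product to obtain surjectivity of the crossed-product symbol map, lift the symbolic inverse $r$ to some $R$, and observe that the ellipticity relations make $P_1R$ (equivalently your $R'=P_1RP_2$, since $P_2$ is the identity on $\im P_2$) a two-sided parametrix modulo compacts. The paper states this last step without elaboration; you have supplied the justification it leaves implicit, namely that $\ker\sigma=\Mat_N(\mathcal K(L^2)\rtimes G)$ and that the covariant representation \eqref{quant3} carries this ideal into the compacts by a density argument.
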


\begin{proof} The crossed product is an exact functor by \cite[Proposition 3.19]{Wil1}.
Hence the exactness of the short exact sequence \eqref{ses} implies 
the exactness of the corresponding sequence of crossed products by  $G$. In particular, the symbol map $\sigma:\Psi(\mathbb{R}^n)\rtimes G\to C(\mathbb{S}^{2n-1})\rtimes G$ is surjective. 

Given $r$ as in Definition \ref{def-ell1}, we therefore find  $R\in \Mat_N( \Psi(\mathbb{R}^n)\rtimes G) $ with symbol equal to $r$. 
Then \eqref{eq-ell3} implies that 
$$
 P_1R:\im P_2\longrightarrow \im P_1
$$
is a two-sided inverse for \eqref{mu-op1} modulo compact operators.
\end{proof}

\begin{remark}
If $G$ is amenable, then the ellipticity condition can be written more explicitly in terms of the so called trajectory symbol by the results of Antonevich and Lebedev \cite{AnLe2}. Their results apply since the action of $G$ on $\mathbb{S}^{2n-1}$ is topologically free.  Moreover, it turns out that ellipticity is a necessary and sufficient condition for the Fredhom property.
\end{remark}

\section{The Index  Theorem}

\paragraph{Difference construction.}

Given a subgroup $G\subset U(n)$ and an elliptic $G$-operator $\mathcal{D}=(D,P_1,P_2)$, we define the  difference construction  for its symbol
\begin{equation}\label{eq-qq1}
[\sigma(\mathcal{D})]\in K_0(C_0(T^*\mathbb{R}^n)\rtimes G)=K_0(C_0( \mathbb{C}^n)\rtimes G)
\end{equation}
following \cite[Sec.~4.2]{NaSaSt17}. 

Let us recall the construction of the element~\eqref{eq-qq1}. We define the matrix projections 
\begin{equation}\label{projector1}
p_1=\frac12
\begin{pmatrix}
  (1-\sin\psi)P_1  &   \sigma^{-1}(D) \cos\psi \\
  \sigma(D) \cos\psi  & (1+\sin\psi)P_2
\end{pmatrix},\quad 
p_0=\begin{pmatrix}
  0  &  0 \\
  0  & P_2
\end{pmatrix}
\end{equation}
over  the $C^*$-crossed product  $C_0( \mathbb{C}^n)\rtimes G$ with adjoint unit, where  $\sigma^{-1}(D)=r$ (see Definition~\ref{def-ell1}), $\psi=\psi(|z|)\in C^\infty(\mathbb{C}^n)$ is a real $G$-invariant function, which for $|z|$ small  is identically $-\pi/2$, for $|z|$ large is $+\pi/2$, and is nondecreasing. We set
$$
[\sigma(\mathcal{D})]=[p_1]-[p_0].
$$

\begin{remark}\label{diff1}
One can see that  \eqref{projector1} defines a projection also in a more general situation (which is an analogue of the Atiyah--Singer difference construction, see~\cite{AtSi1} or  \cite[Sec.~4.2]{NaSaSt17} in the noncommutative setting). Namely,  consider triples
$$
(a,P_1,P_2),\quad \text{where }a,P_1,P_2\in\Mat_N(C(\mathbb{C}^n)\rtimes G),
$$
where $P_1$ and $P_2$ are projections, $a=P_2aP_1$, and the triple is elliptic in the sense of Definition~\ref{def-ell1} for $|x|^2+|p|^2$ large. More precisely, we require that for the restriction of the triple $(a,P_1,P_2)$ to a subset of the form $\{(x,p)\in \mathbb{C}^n\;|\; |x|^2+|p|^2\ge R^2\}$ for some $R>0$ there exists a triple $(r,P_2,P_1)$ such that $ra=P_1$ and $ar=P_2$ (cf.~\eqref{eq-ell3}). Then, if we replace the triple $(\sigma(D),P_1,P_2)$   in \eqref{projector1} by the triple $(a,P_1,P_2)$, then the difference of projections~\eqref{projector1} gives a well-defined class in $K$-theory. Of course, such triples are not  in general  symbols of $G$-operators.
\end{remark}

\paragraph{Homotopy classification.}

Two elliptic $G$-operators $(D_0,P_0,Q_0)$ and $(D_1,P_1,Q_1)$ as in \eqref{mu-op1} are called {\em homotopic} if there exists a continuous homotopy of elliptic operators $(D_t,P_t,Q_t)$, $t\in [0,1]$, which gives the original operators for $t=0$ and $t=1$.
Two elliptic operators are called {\em stably homotopic} if their direct sums with some trivial operators are homotopic. Here trivial operators are operators of the form $(1,P,P)$, where $P$ is a projection. It turns out that stable homotopy is an equivalence relation on the set of elliptic operators. The set of equivalence classes of elliptic $G$-operators is denoted by $\Ell(\mathbb{R}^n,G)$. This set is an Abelian group, where the sum corresponds to the direct sum of operators and the zero of the group is equal to the equivalence class of trivial operators.    

The difference construction  \eqref{eq-qq1} induces the mapping
 \begin{equation}\label{homo1}
 \begin{array}{ccc}
 \Ell(\mathbb{R}^n,G) & \longrightarrow & K_0(C_0(T^*\mathbb{R}^n)\rtimes G),\vspace{1mm}\\
   \mathcal{D}=(D,P_1,P_2)  & \longmapsto & [\sigma(\mathcal{D})].
\end{array}
\end{equation}
\begin{proposition}
The mapping \eqref{homo1} is an isomorphism of Abelian groups.
\end{proposition}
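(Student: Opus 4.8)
The plan is to construct an explicit inverse to the map \eqref{homo1} and verify that the two compositions are the identity. The forward map $[\mathcal D]\mapsto[\sigma(\mathcal D)]$ is visibly well defined on stable homotopy classes: a homotopy of elliptic operators produces a homotopy of the projections \eqref{projector1}, hence a fixed $K_0$-class, and adding a trivial operator $(1,P,P)$ contributes a projector of the form $\begin{pmatrix} 0 & 0\\ 0 & P\end{pmatrix}$ after the construction, which cancels against the shift in $p_0$. It is also clearly additive. So the real content is surjectivity and injectivity.

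For surjectivity, I would start from an arbitrary class in $K_0(C_0(\mathbb C^n)\rtimes G)$ represented by a difference $[q]-[q_0]$ of matrix projections over the unitalization. Since $C_0(\mathbb C^n)\rtimes G$ has a ``symbol at infinity'' structure — restriction to the sphere at infinity gives the exact sequence whose kernel is $C_0(\mathbb R^{2n})\rtimes G$, which is contractible (one may scale $z\mapsto tz$, $G$-equivariantly, killing the $K$-theory of the kernel) — every such class is determined by a triple $(a,P_1,P_2)$ of the type described in Remark~\ref{diff1}: namely $a$ an invertible-at-infinity matrix over $C(\mathbb C^n)\rtimes G$ intertwining two projections $P_1,P_2$ over $C^*(G)$, with $[\sigma(\mathcal D)]=[p_1]-[p_0]$ recovered by \eqref{projector1}. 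The point is then that such a triple is, up to stable homotopy of its class, the symbol of an actual $G$-operator: using surjectivity of $\sigma:\Psi(\mathbb R^n)\rtimes G\to C(\mathbb S^{2n-1})\rtimes G$ (proved in the preceding lemma via exactness of the crossed product functor) one lifts the restriction-to-the-sphere $a|_{\mathbb S^{2n-1}}$ to $D\in\Mat_N(\Psi(\mathbb R^n)\rtimes G)$, replaces $D$ by $P_2DP_1$, and checks ellipticity of $(D,P_1,P_2)$ from the invertibility of $a$ at infinity. This $\mathcal D$ maps to the given class, possibly after adding a trivial operator to match the stabilization.

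For injectivity, suppose $[\sigma(\mathcal D)]=0$ in $K_0$ for $\mathcal D=(D,P_1,P_2)$. Then, after stabilizing by a trivial operator, the projection $p_1$ of \eqref{projector1} is homotopic (through projections over $C_0(\mathbb C^n)\rtimes G$ with adjoined unit) to $p_0$. A homotopy of projections can be implemented by conjugation with a path of unitaries; I would use this to produce a path of triples $(a_t,P_1^t,P_2^t)$ of the Remark~\ref{diff1} type connecting $(\sigma(D),P_1,P_2)$ to a trivial symbol, then lift this path to a homotopy of elliptic $G$-operators by the same lifting argument (surjectivity of $\sigma$ on crossed products, now applied parametrically in $t$, using that $\Psi(\mathbb R^n)\rtimes G\to C(\mathbb S^{2n-1})\rtimes G$ admits continuous local sections because it is a surjection of $C^*$-algebras and one can work with paths of self-adjoint or unitary lifts). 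Concluding that $\mathcal D$ is itself stably homotopic to a trivial operator, i.e. zero in $\Ell(\mathbb R^n,G)$. This is entirely parallel to \cite[Sec.~4.2]{NaSaSt17}, the only new input being the specific algebras $\Psi(\mathbb R^n)$ and its crossed product, for which exactness and surjectivity of $\sigma$ have already been established.

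The main obstacle I anticipate is the lifting of homotopies: turning a homotopy of symbols into a homotopy of elliptic operators requires some care, because a naive lift of $a_t$ need not be invertible-at-infinity for every $t$ and need not satisfy $D_t=P_2^tD_tP_1^t$ on the nose. The standard remedy is to first reduce, via a preliminary homotopy at the symbol level, to a path of the special form \eqref{projector1}–\eqref{eq-ell3} with $r$ varying continuously, lift $\sigma(D)$ and $r$ separately, and then repair the relations by the usual functional-calculus tricks (replacing $D_t$ by $P_2^tD_tP_1^t$ and $r_t$ by $P_1^tr_tP_2^t$, which changes ellipticity only up to compacts and hence up to homotopy). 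Making this precise — and checking that the adjoined-unit / stabilization bookkeeping matches on both sides — is where most of the work lies; everything else follows the noncommutative Atiyah–Singer difference-construction template.
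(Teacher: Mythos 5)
Your overall strategy---well-definedness via the difference construction, then surjectivity and injectivity by lifting symbols and their homotopies through $\sigma:\Psi(\mathbb{R}^n)\rtimes G\to C(\mathbb{S}^{2n-1})\rtimes G$ using exactness of the maximal crossed-product functor and Bartle--Graves continuous selections---is precisely the standard homotopy-classification argument that the paper refers to.

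One parenthetical remark, however, is incorrect and should be deleted: $C_0(\mathbb{R}^{2n})\rtimes G$ is certainly not contractible; its $K_0$ is the very group being computed, and the scaling $z\mapsto tz$ does not define a point-norm continuous family of $*$-endomorphisms of $C_0(\mathbb{R}^{2n})$ down to $t=0$ (at $t=0$ the pullback produces constants, which leave $C_0$). The statement you actually want---that every class in $K_0(C_0(\mathbb{C}^n)\rtimes G)$ is represented by a triple $(a,P_1,P_2)$ which is equal to a fixed projection over $C^*(G)$ outside a compact set---is the $K$-theory normalization built into the unitalized difference construction of Remark~\ref{diff1} and does not rely on any contractibility. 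What is contractible is the ambient algebra $C(\overline{B}{}^{2n})\rtimes G$ in the extension $0\to C_0(\mathbb{R}^{2n})\rtimes G\to C(\overline{B}{}^{2n})\rtimes G\to C(\mathbb{S}^{2n-1})\rtimes G\to 0$, which is what produces the boundary-map description; you appear to have swapped the roles of the ideal and the middle term. With this remark removed or corrected, the rest of your lifting argument is sound and matches the cited proof.
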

The proof is standard, see \cite[Sec.~4.3]{NaSaSt17} or \cite{Sav8}.

\paragraph{Smooth symbols.}

In this paper, we obtain a cohomological index formula. To this end, we use methods of noncommutative geometry and have to assume that our symbol is smooth in a certain sense. More precisely, we make the following assumption. From now on we suppose that $G\subset U(n)$  is a discrete group of polynomial growth~\cite{Gro1}.  Under this assumption, one can define smooth crossed products by actions of $G$, which are  spectrally invariant in the corresponding $C^*$-crossed products (see \cite{Schwe1}).

Recall that the smooth crossed product ${A}\rtimes G$ of a Fr\'echet algebra ${A}$ with the seminorms $\|\cdot\|_m$, $m\in\mathbb{N}$, and a group $G$ of polynomial growth acting on ${A}$ by automorphisms $a\mapsto g(a)$ for all $a\in {A}$ and $g\in G$ is equal to the vector space of  collections $\{a_g\}_{g\in G}$ of elements in $A$ that decay rapidly at infinity in the sense that the following estimates are valid:
$$
\|a_g\|_m\le C_N(1+|g|)^{-N}\quad \text{for all $N,m\in  \mathbb{N},$ and $g\in G$},
$$
where the constant $C_N$ does not depend on $g$. Here $|g|$ is the length of $g$ in the word metric on  $G$. Finally, the action of $G$ on ${A}$ is required to be tempered: for any $m$ there exists $k$ and a   polynomial $P(z)$ with positive coefficients such that $\|g(a)\|_m\le P(|g|)\|a\|_k$ for all $a$ and $g$. 
The product in ${A}\rtimes G$ is defined by the formula: 
$$
\{a_g\}\cdot \{b_g\}=\left\{\sum_{g_1g_2=g}a_{g_1} g_1(b_{g_2})\right\}.
$$

It follows from the results in \cite{Schwe1} that the group $K_0(C_0(\mathbb{C}^n)\rtimes G)$ is isomorphic to the group of stable homotopy classes of elliptic symbols $(\sigma(D),P_1,P_2)$ that are {\em smooth} in the following sense: their components lie in the  smooth crossed products
\begin{equation}\label{ssymb1}
\sigma(D)\in \Mat_N(C^\infty(\mathbb{S}^{2n-1})\rtimes G), \quad P_1,P_2\in \Mat_N(C^\infty(G)).
\end{equation}
Here the smooth group algebra $C^\infty(G)$ is interpreted as the smooth crossed product $\mathbb{C}\rtimes G$.

Our aim is to define the topological index for smooth elliptic symbols. 

\paragraph{Algebraic preliminaries.}

Suppose that a group $G$ acts by automorphisms on a differential graded algebra $A$ with the differential denoted by $d$.

\begin{definition}[cf. \cite{Fds15,CoMo2}]\label{twtr1}
Given $s\in G$, a {\em closed twisted trace} is a linear functional
$$
\tau_s: A\longrightarrow \mathbb{C}
$$
such that 
\begin{itemize}
\item $\tau_s(ab)=\tau_s(bg(a))(-1)^{\deg a \deg b}$ for all $a,b\in A$.
\item $\tau_s(da)=0$ for all $a \in A$. 
\end{itemize} 
Two twisted traces $\tau_s$ and $\tau_{gsg^{-1}}$ are {\em compatible} if $\tau_{gsg^{-1}}(a)=\tau_s(g^{-1}a)$ for all $a\in A$.
\end{definition}
\begin{example}
Let the elements of $A$ and $G$ be represented by operators $a$ and $U_g$ on some Hilbert space. Then we can set
$$
\tau_s(a)=\tr(U_s a)\quad\text{ for all }a\in A,
$$
provided that the operator trace $\tr$ exists. Then this collection of functionals is a compatible collection of twisted traces.
\end{example}

Given a compatible collection of twisted traces and a conjugacy class $\langle s \rangle\subset G$, we define the functional
$$
\tau_{\langle s \rangle}: A\rtimes G \longrightarrow \mathbb{C} 
$$
on the algebraic crossed product of $A$ and $G$ by the formula
\begin{equation}\label{eq-tra1}
 \tau_{\langle s \rangle}\{a_g\}=\sum_{g\in \langle s \rangle}\tau_g(a_g).
\end{equation}
We claim that this functional is a trace, i.e., we have 
$$
 \tau_{\langle s \rangle}(ab)=\tau_{\langle s \rangle}(ba)(-1)^{\deg a \deg b}\quad
 \text{for all $a,b\in A\rtimes G$.}
$$
Indeed,  if both $a$ and $b$ have a single nonzero component denoted by $a_g$ and $b_h$ respectively,  then $ab$ and $ba$ also have a single nonzero component equal to $a_g g(b_h)$ and $b_h h(a_g)$, and we have
\begin{multline}
\tau_{\langle gh\rangle}(ab)=\tau_{  gh }(a_g g(b_h))=\tau_{  g(hg)g^{-1} }(a_g g(b_h))=\\
=\tau_{   hg }(g^{-1}(a_g)  b_h)=\tau_{   hg }(b_h h(a_g)   )(-1)^{\deg a \deg b}=\tau_{\langle gh\rangle}(ba )(-1)^{\deg a \deg b}.
\end{multline}

\paragraph{Twisted traces on differential forms.}

Let $G=U(n)$ act on $\mathbb{C}^n\simeq T^*\mathbb{R}^n$ and consider the induced action on differential forms $C^\infty_c(\mathbb{C}^n,\Lambda(\mathbb{C}^n))$ considered as a differential graded algebra.  We now construct a compatible collection of closed twisted traces for all elements of the unitary group.  To this end, 
given $s\in U(n)$, we define the orthogonal decomposition
$$
 \mathbb{C}^n=L=L_s\oplus L_s^\perp,
$$
where $L_s$ is the fixed point subspace of $s$ (equivalently, it is the eigensubspace associated with eigenvalue $1$),
while $L_s^\perp$ is its orthogonal complement. Then we define the functional 
$$
\begin{array}{ccc}\label{geom1}
\tau_s: C^\infty_c(\mathbb{C}^n,\Lambda(\mathbb{C}^n)) & \longrightarrow  & \mathbb{C}\vspace{2mm}\\
\omega& \longmapsto & \displaystyle \tau_s(\omega)=\int_{L_s } \omega|_{L_s } .
\end{array}
$$
Here, we use the complex orientation on $L_s$ (if $z_j=p_j-ix_j$ are the complex coordinates, then $\prod_j dp_j\wedge dx_j$ is assumed to be positive). Clearly, this definition does not depend on the choice of coordinates $z$. Moreover, these functionals define a compatible collection of twisted traces in the sense of Definition~\ref{twtr1}.  

Thus, for each $s\in G$ we get (see~\eqref{eq-tra1}) a closed graded trace
$$
 \tau_{\langle s\rangle}:C^\infty_c(\mathbb{C}^n,\Lambda(\mathbb{C}^n))\rtimes U(n) \longrightarrow \mathbb{C}
$$
on the algebraic crossed product.

\paragraph{The definition of the topological index.} Let us define the topological index as the functional
$$
\ind_t: K_0(C_0( \mathbb{C}^n)\rtimes G)\longrightarrow \mathbb{C}.
$$
To this end, we represent classes in the latter $K$-group as formal differences $[P_1]-[P_0]$ of projections in the smooth crossed product $\Mat_N(C^\infty(\mathbb{C}^n)\rtimes G)$ such that $P_1=P_0$ at infinity in $\mathbb{C}^n$. Then we set
\begin{equation}\label{indt2}
\ind_t([P_1]-[P_0])
=\sum_{\langle s\rangle\subset G} \frac{1}{\det(1-s|_{L_s^\perp})}{\rm tr }\;\tau_{\langle s\rangle}
\left(P_1\exp\left(-\frac{ dP_1dP_1}{2\pi i}\right)-P_0\exp\left(-\frac{ dP_0dP_0}{2\pi i}\right) \right).
\end{equation}
(cf.~\cite{AtSe2}). Here the summation is over the set of all conjugacy classes $\langle s\rangle\subset G$ and ${\rm tr}$ stands for the matrix trace. Note that each summand in   \eqref{indt2} is  homotopy invariant. We refer to this invariant as the {\em topological index localized at the conjugacy class} $\langle s\rangle\subset G$ and denote it by $\ind_t([P_1]-[P_0])(s)$.

\paragraph{The index theorem.}

\begin{theorem}
Given an elliptic $G$-operator $\mathcal{D}=(D,P_1,P_2)$ associated with a discrete group $G\subset U(n)$ of polynomial growth, the following index formula holds
\begin{equation}\label{indf1}
 \ind \mathcal{D}=\ind_t[\sigma(\mathcal{D})].
\end{equation}
\end{theorem}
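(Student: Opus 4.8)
\section*{Proof proposal}

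The plan is to recognize both sides of \eqref{indf1} as group homomorphisms on $\Ell(\mathbb{R}^n,G)$ and then to verify the identity on a convenient set of generators. The left-hand side $\ind\mathcal{D}$ is additive under direct sums, vanishes on trivial operators $(1,P,P)$, and is invariant under homotopies of elliptic $G$-operators by the usual stability of the Fredholm index; the right-hand side $\ind_t[\sigma(\cdot)]$ is additive, vanishes on trivial operators (its difference construction is zero), and is homotopy invariant since each summand in \eqref{indt2} is, as noted after that formula. Composing with the isomorphism $\Ell(\mathbb{R}^n,G)\simeq K_0(C_0(\mathbb{C}^n)\rtimes G)$ of \eqref{homo1}, it therefore suffices to prove \eqref{indf1} on a set of generators of $K_0(C_0(\mathbb{C}^n)\rtimes G)$.

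Such generators are produced by the equivariant Bott periodicity \eqref{eq-bp1}. Consider on $L^2(\mathbb{R}^n)\otimes\Lambda^\bullet\mathbb{C}^n$ the Euler operator $\mathrm{Eul}=\delta+\delta^*$ built from creation and annihilation operators, where $\delta=\sum_j a_j^*\otimes\varepsilon(dz_j)$ with $a_j^*=\tfrac1{\sqrt2}(x_j-\partial_{x_j})$. This is an elliptic Shubin operator, and, since $\ker\mathrm{Eul}$ is the line spanned by the Gaussian ground state $\Omega=e^{-|z|^2/2}$ (sitting in degree zero) and $\coker\mathrm{Eul}=0$, it has index one. Conjugation by the metaplectic operators $R_g$ transforms $(a_1,\dots,a_n)$ according to the standard representation of $U(n)$, while $U(n)$ acts on $\Lambda^\bullet\mathbb{C}^n$ by the corresponding exterior powers; hence $\mathrm{Eul}$ is $U(n)$-equivariant and, for $G\subset U(n)$, defines a $G$-operator whose symbol $\beta:=[\sigma(\mathrm{Eul})]$ can be twisted by any projection $P\in\Mat_N(C^*G)$, yielding the $G$-operator $\mathcal{D}_P:=\mathrm{Eul}\otimes P$ with triple $(\sigma(\mathrm{Eul})\otimes P,\;P\otimes\Lambda^{\mathrm{ev}},\;P\otimes\Lambda^{\mathrm{odd}})$. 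The resulting map $[P]\mapsto\beta\cdot[P]$, $K_0(C^*G)\to K_0(C_0(\mathbb{C}^n)\rtimes G)$, is the periodicity isomorphism of \cite{HKT1}; we give an independent, symbol-level proof by exhibiting its inverse as the Atiyah--Bott sharp product with the restriction of $\beta$ to a small sphere and checking that both composites are the identity, using Lemma~\ref{lem6} to reduce the verification to the subgroups $O(n)$ and $U(1)$, where the inverse is explicit. Since $\beta\cdot(-)$ is then onto, every class in $K_0(C_0(\mathbb{C}^n)\rtimes G)$ equals $\beta\cdot[P]$ for some projection $P$ over $C^*G$, and by the first paragraph it remains to prove \eqref{indf1} for $\mathcal{D}_P$.

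For the analytic side, observe that $R_g\Omega=\Omega$ for all $g\in G$ (on the $O(n)$-part $R_g$ is the radial rotation, which fixes $\Omega$; on the $U(1)$-part the metaplectic phase $e^{i\operatorname{tr}A/2}$ in \eqref{eq-quant1} cancels the harmonic-oscillator ground-state factor, as one checks from the $n=1$ example; Lemma~\ref{lem6} concludes). Hence $P$ acts on $\mathbb{C}\Omega\otimes\mathbb{C}^N$ as $1\otimes\varepsilon_*(P)$, where $\varepsilon\colon C^*G\to\mathbb{C}$ is the trivial representation, so $\ker\mathcal{D}_P\simeq\Omega\otimes\operatorname{im}(\varepsilon_*P)$ and $\coker\mathcal{D}_P=0$, giving
$$
\ind\mathcal{D}_P={\rm tr}\,(\varepsilon_*P)=\sum_{\langle s\rangle\subset G}{\rm tr}\,\tau^{C^*G}_{\langle s\rangle}(P),
$$
where $\tau^{C^*G}_{\langle s\rangle}$ is the standard trace on the group algebra localized at $\langle s\rangle$ and we used $\varepsilon=\sum_{\langle s\rangle}\tau^{C^*G}_{\langle s\rangle}$ as functionals. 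For the topological side, one evaluates \eqref{indt2} on $\beta\cdot[P]$, represented by the difference construction \eqref{projector1} applied to the triple above; since $P$ is independent of $z$, the integrand over the fixed-point subspace $L_s$ factors into the Chern--Weil integrand of the Euler symbol restricted to $L_s$ (times the contribution of the $s$-action on $L_s^\perp$) and the constant form $P$. An explicit Gaussian computation shows that this integral over $L_s$, multiplied by the normalizing factor $1/\det(1-s|_{L_s^\perp})$, equals $1$ — this is the localization content of the Bott class and the Atiyah--Bott fixed-point formula for the twisted Euler operator — leaving exactly ${\rm tr}\,\tau^{C^*G}_{\langle s\rangle}(P)$. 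Summing over conjugacy classes gives $\ind_t[\sigma(\mathcal{D}_P)]=\sum_{\langle s\rangle}{\rm tr}\,\tau^{C^*G}_{\langle s\rangle}(P)$, matching the analytic side, which completes the proof.

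\textbf{Main obstacle.} The crux is the second paragraph: producing a self-contained, symbol-theoretic proof that $\beta\cdot(-)\colon K_0(C^*G)\to K_0(C_0(\mathbb{C}^n)\rtimes G)$ is an isomorphism (the equivariant Bott periodicity \eqref{eq-bp1}), including constructing the inverse as a sharp product and verifying the composites via Lemma~\ref{lem6}. The remaining ingredients — additivity and homotopy invariance, the identification of $\ker\mathrm{Eul}$ and the $G$-invariance of $\Omega$, and the Gaussian localization integral — are standard or routine once the $U(n)$-equivariance of the Euler operator is in place.
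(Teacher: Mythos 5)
Your overall strategy coincides with the paper's: both sides are homomorphisms $\Ell(\mathbb R^n,G)\to\mathbb C$, so by the isomorphism \eqref{homo1} and the equivariant Bott periodicity of Theorem~\ref{bott2} it suffices to check \eqref{indf1} on twisted Euler operators $(\mathcal E_0\otimes 1_N,1\otimes P,1\otimes P)$, and then both sides are evaluated directly to $\sum_{\langle g\rangle}\ch_g[P]$. Your analytic-side argument (the Gaussian $\Omega$ is $R_g$-fixed for all $g\in U(n)$, so the kernel of the twisted operator is $\Omega\otimes\im(\varepsilon_*P)$ and the cokernel vanishes) reproduces \eqref{eq-z1}, and your topological-side conclusion matches \eqref{alpha1s}--\eqref{ind-tt}.

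However, the step you flag as the main obstacle is genuinely unresolved, and the route you sketch for it would not work. You propose to invert $\beta$ by a sharp product with ``the restriction of $\beta$ to a small sphere'' and then ``use Lemma~\ref{lem6} to reduce the verification to the subgroups $O(n)$ and $U(1)$.'' Lemma~\ref{lem6} (generation of $U(n)$ by $O(n)$ and $U(1)$) is used in the paper only to prove the $U(n)$-equivariance \eqref{eq-equiv1} of $\mathcal E$; it cannot be used to reduce the verification of Bott periodicity for a fixed discrete $G\subset U(n)$ to the subgroups $O(n)$ and $U(1)$, because the relevant crossed products and $K$-groups depend on $G$ and do not decompose along those generating subgroups. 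The paper's actual proof of Theorem~\ref{bott2} is Atiyah's rotation trick: it builds the diagram \eqref{diaga5} with the doubled space $\mathbb R^{4n}$, defines $\beta'$ by the exterior product $\sigma\mapsto\sigma\#\sigma(\mathcal E_0)$ and $\ind,\ind'$ as Mishchenko--Fomenko indices over $C^*(G)$ and $(C_0(T^*\mathbb R^n)\rtimes G)^+$, and verifies the three identities \eqref{q1}--\eqref{q3}, where \eqref{q3} uses the $180^\circ$ rotation homotopy together with the $G$-invariance of the symbol $\sigma(\mathcal E_0)$. Without this (or an equivalent) argument, the reduction to twisted Euler operators is not justified.

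Two smaller points. First, in your description of the Euler operator, $\delta=\sum_j a_j^*\otimes\varepsilon(dz_j)$ with $a_j^*=\tfrac1{\sqrt2}(x_j-\partial_{x_j})$ cannot have the Gaussian in its kernel: on $0$-forms only $\delta$ acts and $a_j^*\Omega=\sqrt2\,x_j\Omega\neq 0$, so you need the annihilation operators there, as in the paper's $\mathcal E=d+d^*+xdx\wedge+(xdx\wedge)^*$ whose restriction to $0$-forms is $\sqrt2\sum_j a_j\,dx_j\wedge$. Second, the topological-side computation is asserted as an ``explicit Gaussian computation,'' but what is actually used in the paper is the identification \eqref{newold1}--\eqref{newold2} of the sum of twisted traces with the localized Chern character $\ch_g$ (averaged over the centralizer $C_g$), the multiplicativity of $\ch_g$ (\eqref{alpha11}), the cancellation of $\det(1-g|_{L_g^\perp})$, and the Riemann--Roch identity $\int_{L_g}\ch(\sigma(\mathcal E_{L_g}))=1$. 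Your description captures the right outcome, but this chain of reductions is what makes the determinant factor and the integral match up, and should be made explicit.
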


The idea of our proof is to use the homotopy invariance of both sides of the index formula and to use $K$-theory to reduce the operator  to a very special operator,  for which one can compute both sides of the index formula independently and check that they are equal.

\section{The Euler Operator and Equivariant Bott Periodicity} 

The aim of this section is to  define an isomorphism of Abelian groups
$$
 \beta: K_0(C^*(G)) \longrightarrow K_0(C_0(T^*\mathbb{R}^n)\rtimes G).
$$
This isomorphism will be defined in terms of   the Euler operator on $\mathbb{R}^n$.
If $G$ is trivial, then this isomorphism coincides with the classical Bott periodicity isomorphism. For nontrivial groups, this isomorphism is a variant of equivariant Bott periodicity. Note also that if $G\subset O(n)$, then this isomorphism was constructed in \cite{NaSaSt17}.

\paragraph{Euler operator.} Recall that the \emph{classical Euler operator}  on a Riemannian manifold $M$ is
defined by the formula
\begin{equation}\label{eulerrr1}
d+d^*\colon C^\infty(M,\Lambda^{ev}(M))\longrightarrow
C^\infty(M,\Lambda^{odd}(M)).
\end{equation}
It takes differential forms of even degree to differential forms of
odd degree. Here  $d$ is the exterior derivative and $d^*$ is its
adjoint with respect to the Riemannian volume form and the inner
product on forms defined by the Hodge star operator. Let us modify this operator and
obtain the following elliptic operator in $\mathbb{R}^n$ (e.g., see~\cite{HKT1})
\begin{equation}\label{euler2}
\mathcal{E}=d+d^*+xdx\wedge +(xdx\wedge)^*\colon \mathcal{S}(\mathbb{R}^n,\Lambda^{ev}(\mathbb{C}^n))\longrightarrow
\mathcal{S}(\mathbb{R}^n,\Lambda^{odd}(\mathbb{C}^n)).
\end{equation}
Here $xdx=dr^2/2=\sum_j x_jdx_j$, where $r=|x|$. 
Its symbol is invertible for $|x|^2+|p|^2\ne0$.\footnote{Indeed, $\sigma(\mathcal{E})(x,p)=(ip+xdx)\wedge+((ip+xdx)\wedge)^*$.  Hence,
$\sigma(\mathcal{E})^2(x,p)=(|x|^2+|p|^2)Id.$} 
We consider this operator in the Schwartz spaces of complex valued differential forms. 

The following lemma is well known.
\begin{lemma}
The kernel $\ker \mathcal{E}$ can be identified with $\mathbb{C}e^{-|x|^2/2}$, while $\coker\mathcal{E}=0$.
\end{lemma}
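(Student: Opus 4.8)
The plan is to embed $\mathcal{E}$ into the formally self-adjoint operator on forms of all degrees,
\[
\mathbf{D}=d+d^{*}+x\,dx\wedge+(x\,dx\wedge)^{*}\colon \mathcal{S}(\mathbb{R}^{n},\Lambda(\mathbb{C}^{n}))\longrightarrow \mathcal{S}(\mathbb{R}^{n},\Lambda(\mathbb{C}^{n})),
\]
and to compute $\mathbf{D}^{2}$. Since $\mathbf{D}$ is odd for the $\mathbb{Z}/2$-grading of $\Lambda(\mathbb{C}^{n})$ by parity, it splits as $\mathbf{D}=\mathcal{E}\oplus\mathcal{E}^{*}$ with $\mathcal{E}^{*}=\mathbf{D}|_{\Lambda^{odd}}$, so that $\ker\mathcal{E}=\ker\mathbf{D}\cap\Lambda^{ev}$; and because $\mathcal{E}$ is elliptic in the Shubin calculus (its symbol $(ip+x\,dx)\wedge+((ip+x\,dx)\wedge)^{*}$ is invertible off the origin, cf.\ the footnote), it is Fredholm with $\coker\mathcal{E}\cong\ker\mathcal{E}^{*}=\ker\mathbf{D}\cap\Lambda^{odd}$. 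Thus it suffices to determine $\ker\mathbf{D}$ together with the degrees in which it sits.

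First I would rewrite $d+x\,dx\wedge$ through the harmonic oscillator. Using $x_{j}\,(dx_{j}\wedge\,\cdot\,)=(dx_{j}\wedge\,\cdot\,)x_{j}$ and $d=\sum_{j}(dx_{j}\wedge\,\cdot\,)\partial_{x_{j}}$, one gets $d+x\,dx\wedge=\sqrt{2}\,\mathbf{a}$ with $\mathbf{a}=\sum_{j}(dx_{j}\wedge\,\cdot\,)\,a_{j}$, where $a_{j}=\tfrac{1}{\sqrt{2}}(x_{j}+\partial_{x_{j}})$ is the $j$-th annihilation operator; taking adjoints, $(d+x\,dx\wedge)^{*}=\sqrt{2}\,\mathbf{a}^{*}$ with $\mathbf{a}^{*}=\sum_{j}\iota_{\partial_{x_{j}}}\,a_{j}^{\dagger}$, so that $\mathbf{D}=\sqrt{2}(\mathbf{a}+\mathbf{a}^{*})$. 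As the $a_{j}$ commute among themselves while the factors $dx_{j}\wedge$ anticommute, $\mathbf{a}^{2}=(\mathbf{a}^{*})^{2}=0$, whence $\tfrac12\mathbf{D}^{2}=\{\mathbf{a},\mathbf{a}^{*}\}$. Expanding the anticommutator and using $[a_{j},a_{k}^{\dagger}]=\delta_{jk}$ and $\{dx_{j}\wedge,\iota_{\partial_{x_{k}}}\}=\delta_{jk}$, the mixed terms collapse and
\[
\{\mathbf{a},\mathbf{a}^{*}\}=\sum_{j}a_{j}^{\dagger}a_{j}+\sum_{j}(dx_{j}\wedge)\iota_{\partial_{x_{j}}}=\mathcal{N}_{\mathrm{osc}}+\mathcal{N}_{\mathrm{form}},
\]
the sum of the oscillator number operator (spectrum $\mathbb{N}_{0}$, eigenvalue $0$ only on $\mathbb{C}e^{-|x|^{2}/2}$) and the form-degree operator (eigenvalues $0,\dots,n$). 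Hence $\mathbf{D}^{2}=2(\mathcal{N}_{\mathrm{osc}}+\mathcal{N}_{\mathrm{form}})$.

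Finally, $\mathcal{N}_{\mathrm{osc}}$ and $\mathcal{N}_{\mathrm{form}}$ are commuting non-negative self-adjoint operators with a common orthonormal eigenbasis of $L^{2}(\mathbb{R}^{n},\Lambda(\mathbb{C}^{n}))$, namely the tensor-Hermite functions multiplied by the monomials $dx_{I}$, the corresponding $\mathbf{D}^{2}$-eigenvalue being $2(m+|I|)$. For $u\in\mathcal{S}(\mathbb{R}^{n},\Lambda^{ev})$ with $\mathcal{E}u=0$ we have $\mathbf{D}u=0$, hence $\mathbf{D}^{2}u=0$; expanding $u$ in this eigenbasis, every component on an eigenvalue $2(m+|I|)>0$ vanishes, so $u\in\mathbb{C}e^{-|x|^{2}/2}$, a form of degree $0$. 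Conversely $a_{j}e^{-|x|^{2}/2}=0$ and $\iota_{\partial_{x_{j}}}1=0$ give $\mathbf{D}(e^{-|x|^{2}/2})=0$. Therefore $\ker\mathcal{E}=\mathbb{C}e^{-|x|^{2}/2}$, and since $\ker\mathbf{D}$ lies entirely in degree $0$ we get $\ker\mathbf{D}\cap\Lambda^{odd}=0$, i.e.\ $\coker\mathcal{E}=0$. The only delicate points are the bookkeeping of signs and the correct identification of the adjoint $(d+x\,dx\wedge)^{*}$ in the computation of $\{\mathbf{a},\mathbf{a}^{*}\}$; the passage between the Schwartz-space and $L^{2}$ pictures is harmless, since $\mathbf{D}$ is essentially self-adjoint on $\mathcal{S}$ and $\mathcal{E}$ is Shubin-hypoelliptic.
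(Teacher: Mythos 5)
The paper states this lemma as ``well known'' and gives no proof of its own, so there is no argument in the text to compare against; in that sense you were working entirely blind. Your proof is correct. The reduction to the self-adjoint $\mathbf{D}$ on all form degrees, the factorization $\mathbf{D}=\sqrt{2}(\mathbf{a}+\mathbf{a}^{*})$ with $\mathbf{a}^{2}=(\mathbf{a}^{*})^{2}=0$, and the computation $\tfrac12\mathbf{D}^{2}=\{\mathbf{a},\mathbf{a}^{*}\}=\mathcal{N}_{\mathrm{osc}}+\mathcal{N}_{\mathrm{form}}$ are all verified correctly: the cross terms $j\neq k$ cancel because $a_{j}$, $a_{k}^{\dagger}$ commute while $dx_{j}\wedge$, $\iota_{\partial_{x_{k}}}$ anticommute, and the diagonal terms give $e_{j}f_{j}+a_{j}^{\dagger}a_{j}$ via $\{e_{j},f_{j}\}=1$ and $[a_{j},a_{j}^{\dagger}]=1$. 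Since $\mathbf{D}^{2}$ is a nonnegative self-adjoint operator with discrete spectrum and $\ker\mathbf{D}=\ker\mathbf{D}^{2}$, the kernel is the joint zero eigenspace $\mathbb{C}e^{-|x|^{2}/2}\cdot 1\subset\Lambda^{0}\subset\Lambda^{ev}$, giving $\ker\mathcal{E}=\mathbb{C}e^{-|x|^{2}/2}$ and $\ker\mathcal{E}^{*}=0$. This is exactly the route implicitly suggested by the paper's $n=1$ example (where $\mathcal{E}$ is identified with $\sqrt{2}$ times the annihilation operator) and the footnote giving $\sigma(\mathcal{E})^{2}=(|x|^{2}+|p|^{2})\,\mathrm{Id}$, and it is essentially the argument from the cited reference \cite{HKT1}. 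The only point you gloss over — passing from the $L^{2}$-cokernel to the Schwartz-space cokernel — is correctly attributed to Shubin hypoellipticity: any tempered solution of $\mathcal{E}^{*}v=0$ or of $\mathcal{E}u=f$ with $f\in\mathcal{S}$ is automatically Schwartz, so the Fredholm data computed in $L^{2}$ coincide with those on $\mathcal{S}$.
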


\begin{example}
If $n=1$, then 
\begin{equation}\label{euler2a}
\mathcal{E}=\frac{\partial}{\partial x}+x\colon \mathcal{S}(\mathbb{R})\longrightarrow
\mathcal{S}(\mathbb{R})
\end{equation}
is just the annihilation operator modulo $\sqrt{2}$ (here we skip $dx$ in the differential forms in the target space).
\end{example}

It follows from the definition that $\mathcal{E}$ is $O(n)$-equivariant with respect to the natural action of $O(n)$ on differential forms. Let us show that  $\mathcal{E}$  is   equivariant with respect to   $U(n)$.

\paragraph{A unitary representation $\rho:U(n)\to \mathcal{B}(L^2(\mathbb{R}^n,\Lambda(\mathbb{C}^n)))$.}  

The identification $\Lambda(\mathbb{R}^n)\otimes\mathbb{C}\simeq \Lambda(\mathbb{C}^n)$ yields a  unitary representation
$$U(n)\longrightarrow {\rm Aut}(\Lambda(\mathbb{R}^n)\otimes\mathbb{C}),$$
namely the natural representation on the algebraic forms:
$$
g\in U(n),\omega\in \Lambda(\mathbb{C}^n) \longmapsto {g^*}^{-1}\omega,
$$
which is well defined since we consider complex valued forms. 

Complementing the unitary representation ${R}: U(n)\longrightarrow \mathcal{B}(L^2(\mathbb{R}^n))$ introduced in~\eqref{secc1}
we next define the  representation
\begin{equation}\label{eq-diag1}
\rho: U(n)\longrightarrow \mathcal{B}(L^2(\mathbb{R}^n,\Lambda(\mathbb{C}^n)))
\end{equation}
as the diagonal representation:
$$
\rho_g\Bigl(\sum_I \omega_I(x)dx^I\Bigr)= \sum_I {R}_g(\omega_I){g^*}^{-1}(dx^I), \quad g\in U(n),
$$
where we represent differential forms as sums $\sum_I \omega_I(x)dx^I$ over multi-indices 
with $L^2$ coefficients $\omega_I(x)$. 

As a tensor product of unitary representations, $\rho$ is a unitary representation. 

\paragraph{$U(n)$-equivariance of the Euler operator.}
Note that for $g\in O(n)\subset U(n)$ we have $R_g={g^*}^{-1}$, hence in this case $\rho_g={g^*}^{-1}$   is just the natural action of $g$ on differential forms. 
\begin{lemma}
$\mathcal{E}$ is $U(n)$-equivariant, i.e., we have
\begin{equation}\label{eq-equiv1}
\rho_g\mathcal{E}\rho_g^{-1}=\mathcal{E}\text{ for all }g\in U(n).
\end{equation}
\end{lemma}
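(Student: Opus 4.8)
The plan is to reduce the claim to the already-established $U(n)$-equivariance of the section $R$ together with the $O(n)$-equivariance of $\mathcal{E}$, using the generation result of Lemma~\ref{lem6}. Since $\rho$ is a group homomorphism and equivariance of $\mathcal{E}$ under a pair of group elements implies equivariance under their product, the set of $g\in U(n)$ satisfying \eqref{eq-equiv1} is a subgroup; hence it suffices to check \eqref{eq-equiv1} for the generators of $U(n)$ supplied by Lemma~\ref{lem6}, namely the elements of $O(n)$ and the elements $\mathrm{diag}(z,1,\dots,1)$ with $|z|=1$. For $g\in O(n)$, we already noted $R_g={g^*}^{-1}$ and $\rho_g={g^*}^{-1}$, and $\mathcal{E}$ is manifestly $O(n)$-equivariant because each of the four summands $d$, $d^*$, $xdx\wedge$, $(xdx\wedge)^*$ is built from $O(n)$-natural data ($|x|^2$ and the Euclidean metric on forms). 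So the whole problem is concentrated in the one-parameter subgroup $\{\mathrm{diag}(e^{i\varphi},1,\dots,1)\}$.

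For that subgroup it is cleanest to pass to the Lie-algebra level and verify $[\dot\rho_X,\mathcal{E}]=0$ for $X$ the generator $i\varphi\,E_{11}\in u(n)$ (i.e. $B=0$, $A=\varphi\,E_{11}$ in the notation of \eqref{liealg2}); exponentiating then gives \eqref{eq-equiv1} on the subgroup, and one-parameter subgroups through the generators suffice. By \eqref{eq-quant1} the corresponding Hamiltonian is $H=\tfrac{\varphi}{2}(x_1^2+p_1^2)$, so the infinitesimal generator of $R$ is $-i\widehat H$ (up to the scalar shift $i\tr A/2=i\varphi/2$, which is central and commutes with everything), i.e. essentially the harmonic-oscillator operator in the first variable, $-\tfrac{i\varphi}{2}(-\partial_{x_1}^2+x_1^2)$. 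On the form part, the generator of $g\mapsto {g^*}^{-1}$ at this $X$ is the derivation of $\Lambda(\mathbb{C}^n)$ acting only on $dz_1,d\bar z_1$. Thus $\dot\rho_X$ splits as (oscillator in $x_1$) $\otimes 1 + 1\otimes$ (rotation on the $dz_1$-plane), and since $\mathcal{E}$ only couples the first coordinate to the first one-form via $d$ and $xdx\wedge$, the commutator computation reduces to a one-dimensional statement: that the $n=1$ operator $\mathcal{E}=\partial_x+x$ (the annihilation operator, up to $\sqrt2$, cf.~\eqref{euler2a}) intertwines the oscillator flow on $\mathcal{S}(\mathbb{R})$ with the scalar-shifted oscillator flow on $\mathcal{S}(\mathbb{R})\,dx$. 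Equivalently, writing $a=\tfrac1{\sqrt2}(\partial_x+x)$ and $a^*=\tfrac1{\sqrt2}(-\partial_x+x)$ so that $2\widehat H = a^*a+aa^* = 2a^*a+1$, one uses the canonical relations $[a^*a,a]=-a$ to see that $a$ conjugates $e^{-it(a^*a+1/2)}$ on the source into $e^{-it(a^*a+1/2)}$ on the target shifted by the eigenvalue accounting of the removed one-form; this is exactly the discrepancy absorbed by the $\exp(i\tr A/2)$ factor built into $R$ in \eqref{eq-quant1}.

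The main obstacle is bookkeeping rather than conceptual: one must track carefully how the scalar factor $\exp(i\tr A/2)$ in the definition of $R$ combines with the degree shift coming from the form part $\rho_g$ acting on $dz_1$, so that the two sides of \eqref{eq-equiv1} match exactly (not merely up to a phase). Concretely, the annihilation operator lowers the oscillator eigenvalue by $1$ while the form factor contributes the compensating constant, and one has to check these cancel on the nose; this is precisely the point of the normalization chosen in the Proposition on $R$. Once the $n=1$ identity is verified, the tensor decomposition and Lemma~\ref{lem6} finish the proof with no further analysis, since all operators in sight are either bounded unitaries or the fixed differential operator $\mathcal{E}$ acting on Schwartz forms, where these manipulations are justified.
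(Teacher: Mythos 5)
Your argument tracks the paper's proof almost exactly: reduce via Lemma~\ref{lem6} to the generators $O(n)$ and $U(1)$, dispatch $O(n)$ by naturality of $d$, $d^*$, and $x\,dx\wedge$, and handle $U(1)$ by the one-dimensional creation/annihilation computation (your Lie-algebra formulation is a cosmetic variant of the paper's group-level conjugation identity). One small correction to the closing bookkeeping remark, though: the central scalar $\exp(i\tr A/2)$ in $R_g$ cancels between $R_g$ and $R_g^{-1}$ inside the conjugation $\rho_g\mathcal{E}\rho_g^{-1}$ and plays no role in the cancellation you describe; what offsets the ladder identity $e^{-it\widehat{H}}Ae^{it\widehat{H}}=e^{it}A$ is the degree factor ${g^*}^{-1}|_{\Lambda^1}=e^{-it}$ coming from the action of $g$ on the one-form $dx_1$, not the $\exp(i\tr A/2)$ normalization, which is needed only so that $R$ is single-valued on $U(n)$ at all --- a prerequisite for the statement to make sense, but a separate matter from the cancellation itself.
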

\begin{proof}
By Lemma~\ref{lem6}, $U(n)$ is generated by $O(n)$ and $U(1)$. Thus, it suffices to prove \eqref{eq-equiv1} for $g$ in one of these two subgroups.
For $g\in O(n)$, this equality follows from the definition (since $d,d^*, dr^2$ commute with the action of the orthogonal group by shifts). Thus, it remains to prove the statement for $g\in U(1)$. For simplicity, we consider the one-dimensional case (the general case is treated similarly).

Let $n=1$. Then we know  (see \eqref{secc1})
$$
{R}_g=e^{it(1/2-\widehat{H})},\qquad \text{where }g=e^{it}\in U(1).
$$
It is easy to see that
$$
{g^*}^{-1}|_{\Lambda^0(\mathbb{R})}=1,\qquad {g^*}^{-1}|_{\Lambda^1(\mathbb{R})}=e^{-it}.
$$
Hence, the desired equivariance amounts to proving that the   operator
$$
 \frac{\partial}{\partial x}+x
$$
has the property
$$
 e^{it(-1/2-\widehat{H})}\left(\frac{\partial}{\partial x}+x\right)e^{-it(1/2-\widehat{H})}=\frac{\partial}{\partial x}+x.
$$
Let us prove this identity by Dirac's method. We  define creation and annihilation operators
$$
A^*=\frac{1}{\sqrt{2}}\left(-\frac{\partial}{\partial x}+x\right),\qquad A=\frac{1}{\sqrt{2}}\left(\frac{\partial}{\partial x}+x\right)
$$
One also has $\widehat{H}=AA^*-1/2=A^*A+1/2$. This enables us to show that
$$
e^{-it\widehat{H}}A=e^{it/2}e^{-itAA*}A=e^{it/2}Ae^{-itA^*A}=e^{it/2}Ae^{-it(\widehat{H}-1/2)}=
e^{it}Ae^{-it\widehat{H}}.
$$
Hence, we get 
$$
e^{it(-1/2-\widehat{H})}A e^{-it(1/2-\widehat{H})}=e^{-it} e^{-it\widehat{H}}Ae^{it\widehat{H}}=e^{-it}e^{it}A=A.
$$
This completes the proof of equivariance for $n=1$.
\end{proof}

\paragraph{Twisting by a projection.}

Let $P=(P_g)\in\Mat_N(C^*(G))$ be a projection over the group $C^*$-algebra of  $G\subset U(n)$. Then we define a projection
$$
1\otimes P: L^2(\mathbb{R}^n,\Lambda(\mathbb{C}^n)\otimes \mathbb{C}^N)\longrightarrow L^2(\mathbb{R}^n,\Lambda(\mathbb{C}^n)\otimes \mathbb{C}^N)
$$
by the formula  
$$
1\otimes P=\sum_{g\in G} (1\otimes P_g) (\rho_g\otimes 1_N).
$$
The map $P\mapsto 1\otimes P$ is defined by a covariant representation, hence, it gives a homomorphism of $C^*$-algebras. This implies that $1\otimes P$ is a projection.

Since $1\otimes P$ is a projection, its range is a closed subspace denoted by $\im (1\otimes P)$. Thus, we can define the twisted operator as
\begin{equation}\label{euler3}
 \mathcal{E}_0\otimes 1_N:\im (1\otimes P)\longrightarrow\im (1\otimes P),
\end{equation}
where we made a reduction to  the zero-order operator 
$$
\mathcal{E}_0=(\mathcal{E}\mathcal{E}^*+1)^{-1/2}\mathcal{E}.
$$
Since $\mathcal{E}$ is equivariant, it follows that $(\mathcal{E}_0\otimes 1_N)(1\otimes P)=(1\otimes P)(\mathcal{E}_0\otimes 1_N)$. Thus,  $\mathcal{E}_0\otimes 1_N$
preserves $\im (1\otimes P)$. This twisted operator is Fredholm with an almost inverse operator equal to $\mathcal{E}^{-1}_0\otimes 1_N$.

\paragraph{Equivariant Bott periodicity.}

\begin{theorem}\label{bott2}
The mapping
\begin{equation}\label{main4}
 \begin{array}{ccc}
 \beta: K_0(C^*(G))   & \longrightarrow &  K_0(C_0(T^*\mathbb{R}^n)\rtimes G)\\
P & \longmapsto &  [(\sigma(\mathcal{E}_0\otimes 1_N), 1\otimes P ,1\otimes P)]
 \end{array}
\end{equation}
is an isomorphism of Abelian groups. 
\end{theorem}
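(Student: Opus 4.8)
The plan is to produce a two‑sided inverse of $\beta$, namely the analytic index, and to check the two compositions separately: $\alpha\circ\beta=\mathrm{id}$ by computing the kernel of the Euler operator explicitly, and $\beta\circ\alpha=\mathrm{id}$ by a rotation argument on the doubled space $\mathbb C^{n}\oplus\mathbb C^{n}$. First one checks that \eqref{main4} is well defined: by the footnote to \eqref{euler2} the symbol $\sigma(\mathcal E_0)$ is fibrewise unitary on $\mathbb S^{2n-1}$, and by \eqref{eq-equiv1} it is $U(n)$‑equivariant, so $\mathcal E_0\otimes 1_N$ commutes with $1\otimes P$ and the triple $(\sigma(\mathcal E_0\otimes 1_N),1\otimes P,1\otimes P)$ is admissible for the difference construction of Remark~\ref{diff1}, with parametrix $(1\otimes P)\sigma(\mathcal E_0)^*(1\otimes P)$ away from the origin; homotopy invariance and additivity of the difference construction then imply that $\beta(P)$ depends only on the class $[P]\in K_0(C^*(G))$ and that \eqref{main4} is a group homomorphism.

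The inverse candidate is the analytic index. Since each $\rho_g$ is a genuine unitary implementing the $G$‑action on $\mathcal K\big(L^2(\mathbb R^n,\Lambda(\mathbb C^n))\big)$ by conjugation, this crossed product is isomorphic to $\mathcal K\otimes C^*(G)$, so its $K_0$ is $K_0(C^*(G))$; using the exactness of the crossed product applied to \eqref{ses} together with the classification \eqref{homo1} of elliptic $G$‑operators, one defines $\alpha\colon K_0(C_0(T^*\mathbb R^n)\rtimes G)\to K_0(C^*(G))$ by sending the class of an elliptic $G$‑operator to its index $[\ker]-[\coker]\in K_0(\mathcal K\rtimes G)=K_0(C^*(G))$. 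To prove $\alpha\circ\beta=\mathrm{id}$, the key point is that the Gaussian $\psi_0=e^{-|x|^2/2}$, which spans $\ker\mathcal E$ while $\coker\mathcal E=0$, is fixed by the whole representation $\rho$: it is the harmonic‑oscillator vacuum, so \eqref{eq-quant1} gives $R_g\psi_0=e^{it/2}e^{-it\widehat H}\psi_0=\psi_0$ for $g=\mathrm{diag}(e^{it},1,\dots,1)$; for $g\in O(n)$ this is clear; and by Lemma~\ref{lem6} it holds for all $g\in U(n)$ (recall that $\rho_g$ acts on the degree‑zero component through $R_g$). Consequently the rank‑$N$ projection $\Pi=|\psi_0\rangle\langle\psi_0|\otimes 1_N$ is $G$‑fixed and commutes with $1\otimes P$; the index class of the twisted operator \eqref{euler3} equals $[(1\otimes P)\Pi]$, its cokernel being zero and its kernel $(1\otimes P)(\ker\mathcal E\otimes\mathbb C^N)$, and since $(1\otimes P)\Pi=\Pi(1\otimes P)\Pi$ lies in the corner $\Pi(\mathcal K\rtimes G)\Pi=\Mat_N(\mathbb C)\rtimes_{\mathrm{triv}}G=\Mat_N(C^*(G))$ where it equals $P$ exactly, one gets $\alpha(\beta(P))=[P]$. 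In particular $\beta$ is split injective.

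For $\beta\circ\alpha=\mathrm{id}$ the plan is a rotation argument on $\mathbb C^{2n}=\mathbb C^{n}\oplus\mathbb C^{n}$ with $G$ acting diagonally through $G\to U(n)\hookrightarrow U(2n)$. Since $\mathcal E^{(2n)}=\mathcal E^{(n)}\#\mathcal E^{(n)}$ as operators on $\Lambda(\mathbb C^n)\mathbin{\hat\otimes}\Lambda(\mathbb C^n)=\Lambda(\mathbb C^{2n})$, external multiplication by the Bott class $\mathfrak b=[\sigma(\mathcal E^{(n)}_0)]$ in the first or the second summand defines maps $\beta_1,\beta_2$ into $K_0(C_0(\mathbb C^{2n})\rtimes G)$ (landing in the diagonal‑$G$ crossed product because $\mathcal E^{(n)}$ is $U(n)$‑equivariant), and forming the index along the first or the second $\mathbb R^n$ of such an external product defines maps $\alpha_1,\alpha_2$; by multiplicativity of the index under external products and the equality $\alpha^{(n)}(\mathfrak b)=1$ (the case $P=1$ of the preceding paragraph), $\alpha_i\beta_i=\mathrm{id}$ and the Fubini identity $\alpha_1\circ\beta_2=\beta^{(n)}\circ\alpha^{(n)}$ hold. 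The coordinate swap on $\mathbb C^{2n}$ commutes with the diagonal $G$ and is $G$‑equivariantly homotopic to the identity (its $\pm1$‑eigenspaces, the diagonal and the anti‑diagonal, are both $G$‑invariant), hence acts trivially on $K_0(C_0(\mathbb C^{2n})\rtimes G)$ and identifies $\beta_2(u)$ with $\beta_1(u)$; therefore $\beta^{(n)}(\alpha^{(n)}(u))=\alpha_1\beta_2(u)=\alpha_1\beta_1(u)=u$, which together with $\alpha\circ\beta=\mathrm{id}$ proves that $\beta$ is an isomorphism.

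The most delicate part, I expect, is the index computation of the second paragraph together with the bookkeeping of the third: one has to make certain that the index is genuinely the $C^*(G)$‑module class $[P]$ and not merely its dimension — which is exactly where the $G$‑invariance of the vacuum $\psi_0$ is indispensable — and one has to make precise the external products, the partial index maps, and the compatibility of the rotation homotopy with the difference construction on $\mathbb C^{2n}$ (in particular, that each $\alpha_i$ is defined on the external products to which it is applied, and that $\alpha$ is multiplicative with respect to them).
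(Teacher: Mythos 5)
Your proposal is correct and takes essentially the same route as the paper: define the analytic index as the inverse candidate, prove $\alpha\circ\beta=\mathrm{id}$ by identifying the ($G$-invariant) Gaussian kernel of the twisted Euler operator with $\im P$, and establish $\beta\circ\alpha=\mathrm{id}$ via the Atiyah rotation trick on the doubled space, using multiplicativity of the index and a $G$-equivariant homotopy swapping the two factors. The paper packages the rotation argument in a three-relation diagram $\ind\beta=I$, $\ind'\beta'=I$, $\ind'\beta'=\beta\ind$, defines $\ind$ through Mishchenko--Fomenko $C^*(G)$-modules rather than the isomorphism $\mathcal K\rtimes G\simeq\mathcal K\otimes C^*(G)$, and uses a real rotation in the $(x,p;y,q)$-plane rather than your complex rotation scaling the anti-diagonal, but these are presentational variants of the same argument.
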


\begin{proof}

The idea (going back to Atiyah \cite{Ati7}) is to include the mapping $\beta$ in the diagram:
\begin{equation}\label{diaga5}
 \xymatrix{
   K_0(C^*(G)) \ar[r]^\beta & K_0(C_0(T^*\mathbb{R}^n)\rtimes G) \ar[r]^{\beta'} \ar@/^2pc/[l]^{\ind} & K_0(C_0(\mathbb{R}^{4n})\rtimes G) \ar@/^2pc/[l]^{\ind'} 
}
\end{equation}
of Abelian groups and homomorphisms with the following properties
\begin{eqnarray}
\ind\circ \beta=I, \label{q1}\\
\ind'\circ\beta'=I ,\label{q2}\\
\ind'\circ \beta'=\beta\circ \ind. \label{q3}
\end{eqnarray}
Clearly, if we construct the diagram with these properties, then $\beta$ and $\ind$ are mutually inverse homomorphisms and the theorem is proved. 

It remains to construct the diagram with these properties.
The mappings $\beta,\beta'$ will be defined by taking exterior products with the Euler operator, while   $\ind,\ind'$
will be analytic index mappings. Hence, properties \eqref{q1} and \eqref{q2} follow from the multiplicative property of the index and the fact that the index of the Euler operator is equal to one. It turns out that the remaining property \eqref{q3} also follows from the multiplicative property of the index and an explicit homotopy of symbols (the so-called Atiyah rotation trick \cite{Ati7}). Let us now give the detailed proof. 

\paragraph{1. Definition of the mapping $\beta'$.} 
Consider the doubled space
\begin{equation*}
    \mathbb{R}^{4n}=\mathbb{R}^{2n}\times\mathbb{R}^{2n},\quad (x,p,y,q)\in \mathbb{R}^{4n},
\end{equation*}
with the diagonal action of $G$ on it. Let us define the  mapping
\begin{align*}
    \beta'\colon K_0(C_0(T^*\mathbb{R}^n)\rtimes G)&\longrightarrow K_0(C_0(\mathbb{R}^{4n})\rtimes G), \\
     {}[\sigma]&\longmapsto[\sigma\#\sigma(\mathcal{E})_0] 
\end{align*}
in terms of the exterior product of symbols, see \cite[Sec.~6.2]{NaSaSt17}. 
We recall the definition of the exterior product.
To this end, let $\mathbf{a}=(a,P_1,P_2)$ and $\mathbf{b}=(b,Q_1,Q_2)$ be triples over
 $C(\mathbb{R}^{2n})\rtimes G$  and $C(\mathbb{R}^{2n})$  respectively,  see Remark~\ref{diff1}.
Suppose in addition that  the projections $P_j,Q_j$ are self-adjoint, and $\mathbf{b}$ is
equivariant. This means that we have two homomorphisms  $\rho_j:G\to {\rm End}(\im Q_j)$ from $G$  to the group of unitary automorphisms of the vector bundles equal to the ranges of $Q_1$ and $Q_2$, and $b$ intertwines these homomorphisms: $b\rho_1(g)=\rho_2(g)b$ for all $g$.
\begin{definition}\label{cross-sy1}
The \textit{exterior product} of the triples  $\mathbf{a}$ and
$\mathbf{b}$ is the triple   
\begin{equation*}
    \mathbf{a}\#\mathbf{b}=\left(\begin{pmatrix}
      a\otimes 1 & -1\otimes b^* \\
      1\otimes b & a^*\otimes 1
    \end{pmatrix}, \begin{pmatrix}P_1\otimes Q_1\!\!\!\!\!\!&0
      \\
      \,\,\,0&P_2\otimes Q_2
    \end{pmatrix}, \begin{pmatrix}P_2\otimes Q_1\!\!\!\!\!\!&0
      \\
      \,\,\,0&P_1\otimes Q_2
    \end{pmatrix}\right)
\end{equation*}
over $C (\mathbb{R}^{4n})\rtimes G$. Here the elements of these matrices are in matrix algebras over the crossed product $C (\mathbb{R}^{4n})\rtimes G$ and they are defined as 
$$
 (a\otimes 1)_g=a_g\otimes \rho_1(g),\quad (a^*\otimes 1)_g=(a^*)_g\otimes \rho_2(g), \quad 
 (P_j\otimes Q_k)_g=P_{jg}\otimes \rho_k(g)Q_k.
$$
We shall frequently abridge this notation and simply write
\begin{equation*}
    a\#b=\begin{pmatrix}
      a&-b^* \\
      b & a^*
    \end{pmatrix},
\end{equation*}
omitting the projections and tensor products by identity operators.
\end{definition}

\begin{lemma}
Suppose that the triples $\mathbf{a}=(a,P_1,P_2)$ and $\mathbf{b}=(b,Q_1,Q_2)$ are elliptic. Then their   exterior product $\mathbf{a}\#\mathbf{b}$   is elliptic.
\end{lemma}
 
\begin{proof}
1. Let us state the ellipticity condition for triples in $C^*$-algebraic terms. Consider a triple $(a,P_1,P_2)$ with components in a $C^*$-algebra $A$, $P_j=P_j^*=P_j^2$, and $a=P_2aP_1$. Such a triple is elliptic if there exists  $r\in A$ such that $ar=P_2$ and $ra=P_1$.  We claim that the ellipticity is equivalent to the  following two conditions
\begin{equation}\label{eq-adj1}
\begin{array}{c}
 aa^* \text{ is invertible in the $C^*$-algebra }  P_2AP_2,\\
 a^*a \text{ is invertible in the $C^*$-algebra }  P_1AP_1.
\end{array}
\end{equation}
The proof is standard. Namely.  ellipticity of $(a,P_1,P_2)$ is equivalent to that of $(a^*,P_2,P_1)$ and, hence, to that of 
$$
\left(
\begin{pmatrix}
      0& a^* \\
      a & 0
    \end{pmatrix},
\begin{pmatrix}
      P_1& 0 \\
      0 & P_2
    \end{pmatrix},
\begin{pmatrix}
      P_1& 0 \\
      0 & P_2
    \end{pmatrix}
\right).
$$
Further, the ellipticity of this matrix triple is equivalent to the invertibility of the matrix $\begin{pmatrix}
      0& a^* \\
      a & 0
    \end{pmatrix}$ in the algebra $P_1AP_1\oplus P_2AP_2$.  Finally the invertibility of this self-adjoint matrix is equivalent  to the invertibility of its square, which gives the desired result. 

2. Thus, to prove the lemma, it suffices to prove the  invertibility of $(a\# b)^*(a\# b)$ and $(a\# b)(a\# b)^*$ in the corresponding $C^*$-algebras. Let us prove that the first element is invertible. The verification for the second element is similar. Using the equivariance of $\mathbf{b}$, we obtain that the off-diagonal elements in $a\# b$ and $(a\# b)^*$ commute with the elements on the diagonal. This implies that the composition  
\begin{equation}\label{eq-prod3}
(a\# b)^*(a\# b)\\
={\rm diag}(a^*a\otimes 1+1\otimes b^*b, aa^*\otimes 1+1\otimes bb^*)
\end{equation}
is a diagonal matrix.  Let us prove that the upper left corner of this matrix is invertible (the invertibility of the lower right corner is proved similarly) in the algebra
\begin{equation}\label{eq-elem2}
a^*a\otimes 1+1\otimes b^*b\in   (P_1\otimes Q_1 )
\Mat_{N^2}(C(\mathbb{R}^{4n})\rtimes G)
  (P_1\otimes Q_1).  
\end{equation} 
Let us denote this element and the algebra in \eqref{eq-elem2} as $u$ and $\mathcal{A}(\mathbb{R}^{4n})$ respectively.
Moreover, given a $U(n)$-invariant closed subset $U\subset\mathbb{R}^{4n}$, we denote the corresponding algebra by $\mathcal{A}(U)$.

Since $(a,P_1,P_2)$ is elliptic on the set $\{|x|^2+|p|^2\ge R^2\}$  and $1\otimes b^*b$ is nonnegative, it follows that  the element \eqref{eq-elem2}
is invertible in the algebra $\mathcal{A}(\mathbb{R}^{4n}\cap\{|x|^2+|p|^2\ge R^2\})$ as a sum of nonnegative elements, one of which is invertible. Denote by $r_1\in \mathcal{A}(\mathbb{R}^{4n}\cap\{|x|^2+|p|^2\ge R^2\})$ the inverse element and by $\widetilde r_1\in \mathcal{A}(\mathbb{R}^{4n})$ a lift under the projection mapping 
$$
   \mathcal{A}(\mathbb{R}^{4n})\longrightarrow \mathcal{A}(\mathbb{R}^{4n}\cap\{|x|^2+|p|^2\ge R^2\}).
$$
Such a lift exists by the exactness of the maximal crossed product functor \cite[Proposition 3.19]{Wil1}. Then the differences
\begin{equation}\label{eq-r1}
u \widetilde r_1-1, \widetilde r_1 u-1
\end{equation}
vanish in the domain $ \{|x|^2+|p|^2\ge R^2\}$.
Similarly, using the ellipticity of $(b,Q_1,Q_2)$, we obtain an element $\widetilde r_2\in \mathcal{A}(\mathbb{R}^{4n})$ such that the differences
\begin{equation}\label{eq-r2}
u \widetilde r_2-1, \widetilde r_2 u-1
\end{equation}
vanish in the domain $ \{|y|^2+|q|^2\ge R^2\}$.
Let us now consider the element
$$
 r=\widetilde r_1 \chi_1+ \widetilde r_2 \chi_2\in \mathcal{A}(\mathbb{R}^{4n}\cap\{|x|^2+|p|^2+|y|^2+|q|^2\ge 4R^2\} ),
$$
where $\chi_1,\chi_2\in C^\infty(\mathbb{R}^{4n}\cap\{|x|^2+|p|^2+|y|^2+|q|^2\ge 4R^2\})$  is a $U(n)$-invariant partition of unity associated with the covering  of the set $\{|x|^2+|p|^2+|y|^2+|q|^2\ge 4R^2\}$ by the domains 
$$
\mathbb{R}^{4n}\cap\{|x|^2+|p|^2+|y|^2+|q|^2\ge 4R^2\}\cap \{|x|^2+|p|^2\ge R^2\},
$$
$$
\mathbb{R}^{4n}\cap\{|x|^2+|p|^2+|y|^2+|q|^2\ge 4R^2\}\cap \{|y|^2+|q|^2\ge R^2\}.
$$
We claim that $r$ is the inverse of $u$ over the domain $\{|x|^2+|p|^2+|y|^2+|q|^2\ge 4R^2\}$.
Indeed, we have 
$$
ur=u\widetilde r_1 \chi_1+ u\widetilde r_2 \chi_2=(u\widetilde r_1-1) \chi_1+ (u\widetilde r_2-1) \chi_2+\chi_1+\chi_2
= 0+0+1=1.
$$
A similar computation shows that $ru=1$.

Thus, we proved that $(a\# b)^*(a\# b)$ and $(a\# b)(a\# b)^*$ are invertible in the corresponding $C^*$-algebras. Hence, by part~1 of the proof, the exterior product $a\# b$ is elliptic.
\end{proof}

\begin{remark}
One similarly defines the exterior product if  \emph{the first}
factor is equivariant. More generally, whenever we write an
expression of the form $a\#b$, we implicitly assume that one of the
factors is equivariant, and depending on which of the factors is
equivariant, we apply the corresponding definition. (If both
symbols are equivariant, we can use any of the definitions; both
give the same result.)
\end{remark}

\paragraph{2. Definition of the mapping $\ind$.} 
Given an elliptic $G$-operator  $(D,P_1,P_2)$  on $\mathbb{R}^n$, where
$$
D=\sum_g D_gR_g,\qquad P_j=\sum_g  P_{j,g} R_g ,\quad j=1,2,
$$
we now construct a $G$-operator 
acting in Hilbert modules over the group $C^*$-algebra $C^*(G)$ following the construction in~\cite[Sec.~5.2]{NaSaSt17}. To this end, let $L_g$ be the operator of left translation by $g$
in the free $C^*(G)$-module  $C^*(G)^N$. We define operators
\begin{equation}\label{tilde}   
     \widetilde D =\sum_g D_g R_g\otimes L_g, \qquad \widetilde P_{j}=\sum_g  P_{j,g} R_g\otimes L_g,\quad j=1,2,
\end{equation}
acting in the space $L^2(\mathbb{R}^n,C^*(G)^N)$. The operators are well defined by the universal property of the maximal crossed product. Then we consider the   operator 
\begin{equation}\label{volna-operator}
 \widetilde P_2\widetilde D \widetilde P_1: \im \widetilde P_1 \longrightarrow \im \widetilde P_2
\end{equation}
over the $C^*$-algebra $C^*(G)$ acting between the ranges of the projections
\begin{equation*}
    \widetilde P_j\colon L^2(\mathbb{R}^n,C^*(G)^N)\longrightarrow L^2(\mathbb{R}^n,C^*(G)^N)
\end{equation*}
considered as right Hilbert $C^*(G)$-modules. We claim that the operator \eqref{volna-operator} is $C^*(G)$-Fredholm in the sense of Mishchenko and Fomenko \cite{MiFo1}. Indeed, its almost-inverse operator is equal to $\widetilde P_1\widetilde{D^{-1}}\widetilde P_2$, where $D^{-1}$ is a $G$-operator with the symbol $r$, see~Definition~\ref{def-ell1}.
Thus, the operator \eqref{volna-operator} has an index
$$
 \ind_{C^*(G)} (\widetilde P_2\widetilde D \widetilde P_1: \im \widetilde P_1 \longrightarrow \im \widetilde P_2)  \in K_0(C^*(G)).
$$
Then we define
\begin{equation}\label{Bott-1}
  \begin{array}{ccc}
   \ind \colon K_0(C_0(T^*\mathbb{R}^n)\rtimes G) &  \longrightarrow & K_0(C^*(G)) \vspace{2mm} \\
  {}  [(\sigma(D),P_1,P_2)] &\longmapsto &  \ind_{C^*(G)} (\widetilde P_2\widetilde D \widetilde P_1: \im \widetilde P_1 \longrightarrow \im \widetilde P_2).
\end{array}
\end{equation} 

\paragraph{3. Definition of the mapping $\ind'$.} 

We define the index mapping
\begin{equation*}
    \ind'\colon K_0(C_0(\mathbb{R}^{4n})\rtimes G)\longrightarrow K_0(C_0(T^*\mathbb{R}^n)\rtimes G)
\end{equation*}
as follows. Let   $(x,p,y,q)$ be variables in $\mathbb{R}^{4n}$. Then each class in $K_0(C_0(\mathbb{R}^{4n})\rtimes G)$ contains a representative of the form
\begin{equation}\label{triple1}
(a,P_1,P_2),\qquad a\in \Mat_N(C (\mathbb{R}^{4n})\rtimes G), \quad P_{1,2}\in\Mat_N(C (T^*\mathbb{R}^n)\rtimes G),
\end{equation}
which is elliptic for large $(x,p,y,q)$ and such that
\begin{enumerate}
\item[(1)]   $ a(x,p,y,q)=P_1(x,p)=P_2(x,p)={\rm diag}(1,..,1,0,...,0)$ if $|x|^2+|p|^2\ge R^2$ for some $R>0$; 
\item[(2)]  $a(x,p,y,q)$  is homogeneous of degree zero in $(y,q)$ for $(y,q)$ large uniformly in $(x,p)$. 
\end{enumerate}
Such  a representative can be obtained if we use stable homotopies of the symbol and the projections. 
Note that here  we use the realization of the group $K_0(C_0(\mathbb{R}^{4n})\rtimes G)$ in terms of triples \eqref{triple1},
where the element $a$  defines the equivalence of projections at infinity (see Remark~\ref{diff1}).

We treat the triple in~\eqref{triple1} as a symbol of a $G$-operator and associate to it as in \eqref{tilde} the corresponding operator
\begin{equation}\label{goodop1}
\widetilde a\left(x,p,y,-i\frac\partial{\partial y}\right):\widetilde P_1 L^2(\mathbb{R}^n_y, (C_0(T^*\mathbb{R}^n)\rtimes G)^+\otimes\mathbb{C}^N)
\longrightarrow  \widetilde P_2 L^2(\mathbb{R}^n_y, (C_0(T^*\mathbb{R}^n)\rtimes G)^+\otimes\mathbb{C}^N) 
\end{equation}
acting in Hilbert $(C_0(T^*\mathbb{R}^n)\rtimes G)^+$-modules.
This operator is Fredholm with almost inverse operator defined by the triple $(a^{-1},P_2,P_1)$.
 Consider the index of this operator
\begin{equation}\label{eq-ind7}
 \ind_{C_0(T^*\mathbb{R}^n)\rtimes G}\widetilde a\left(x,p,y,-i\frac\partial{\partial y}\right) \in K_0(C_0(T^*\mathbb{R}^n)\rtimes G)
\end{equation} 
as the
$(C_0(T^*\mathbb{R}^n)\rtimes G)^+$-index of  operator \eqref{goodop1}.  \emph{A priori} this index lies in
$K_0((C_0(T^*\mathbb{R}^n)\rtimes G)^+)$, but one can readily show that the homomorphism
$(C_0(T^*\mathbb{R}^n)\rtimes G)^+\to\mathbb{C}$, whose kernel is $C_0(T^*\mathbb{R}^n)\rtimes G$, takes the operator \eqref{goodop1} to the
identity operator (by our assumption (1) above), whose index is zero, and hence
\begin{equation*}
 \ind_{(C_0(T^*\mathbb{R}^n)\rtimes G)^+}\widetilde a\left(\!x,p,y,-i\frac\partial{\partial y}\right)\in K_0(C_0(T^*\mathbb{R}^n)\rtimes G)\equiv \ker
 \bigl(K_0((C_0(T^*\mathbb{R}^n)\rtimes G)^+)\to K_0(\mathbb{C}) \bigr).
\end{equation*}
Finally, we define $\ind'[(a,P_1,P_2)]$ as the index \eqref{eq-ind7}.

\paragraph{4. Proof of \eqref{q1}.}

Let us prove that $\ind\circ\beta=I$. To this end, note
that if $[P]\in K_0( C^*(G))$, where $P$ is a projection over $C^*(G)$, then the class $\beta[P]\in K_0(C_0(T^*\mathbb{R}^n)\rtimes G)$ is
represented by the elliptic symbol
$$
(\sigma(\mathcal{E}_0\otimes 1_N),1\otimes P,1\otimes P).
$$
Hence $\ind \beta[P]$ is equal to the $C^*(G)$-index
of the operator
\begin{equation}\label{uuuu}
     \widetilde{\mathcal{E}}_0\otimes 1_N\colon
   1\otimes \widetilde P L^2(\mathbb{R}^n,\Lambda^{ev}(\mathbb{C}^n) \otimes C^*(G)^N)
    \longrightarrow
    1\otimes \widetilde P L^2(\mathbb{R}^n, \Lambda^{odd}(\mathbb{C}^n)\otimes C^*(G)^N).
\end{equation}
However, the cokernel of $\mathcal{E}_0$ is trivial, and the kernel is
one-dimensional and consists of $G$-invariant elements. Thus, the
cokernel of the operator~\eqref{uuuu} is trivial  and the kernel is
\begin{equation*}
    \ker 1\otimes \widetilde P(\widetilde{\mathcal{E}}_0\otimes 1_N)=\ker \mathcal{E}\otimes \im \widetilde P \simeq\im P\subset C^*(G)^N.
\end{equation*}
We obtain the desired equality
\begin{equation} \label{bott11}
    \ind \beta[P]=[P].
\end{equation}

\paragraph{5. Proof of \eqref{q2}.}

The proof is similar to that in \cite{NaSaSt17}. For the sake of completeness, let us give a shorter proof here.
Given an arbitrary element in $K_0(C_0(T^*\mathbb{R}^n)\rtimes G)$, we choose its  representative of the form
$$
 {\bf a}=(a,P_1,P_2),\quad a,P_1,P_2\in C (T^*\mathbb{R}^n,\Mat_N(\mathbb{C}))\rtimes G,
$$
where $P_1=P_2=a={\rm diag}(1,1,..,1,0,0,..,0)$ in the domain $|x|^2+|p|^2\ge R^2$ for some $R>0$. Then in the class of the element $\beta'[{\bf a}]$ we choose the following representative
\begin{equation}\label{bott17}
a\# \sigma(\mathcal{E}_0)=
 \left(
    \begin{array}{cc}
       a(x,p)\otimes 1 & -\chi(x,p )  (1\otimes \sigma^*(\mathcal{E}_0)(y,q)) \\
       \chi(x,p )  (1\otimes \sigma(\mathcal{E}_0)(y,q))  & a^*(x,p)\otimes 1
   \end{array}
 \right)
\end{equation}
where $\chi(x,p)$ is a smooth $U(n)$-invariant function with compact support on $T^*\mathbb{R}^n$ such that $\chi(x,p)\equiv 1$ 
whenever $|x|^2+|p|^2\le R^2$. Furthermore, we  suppose that here $\sigma(\mathcal{E}_0)(y,q)$ is homogeneous at infinity and
continuous at $y=q=0$.  Clearly, this representative satisfies the properties in the definition of the mapping $\ind'$.
Hence, we have by the definition of the mapping $\ind'$ the following equality
$$
 \ind'\beta'[{\bf a}]=\ind_{(C_0(T^*\mathbb{R}^n)\rtimes G)^+}\widetilde{A},
$$
where $\widetilde{A}$  is an operator in Hilbert $(C_0(T^*\mathbb{R}^n)\rtimes G)^+$-modules  associated with the symbol
\eqref{bott17}. We make the following choice of $\widetilde{A}$ :
\begin{multline}
\widetilde{A}=
 \left(
    \begin{array}{cc}
       \widetilde{a}(x,\xi)\otimes (1-\Pi) & -\chi(x,\xi )  (1\otimes  \mathcal{E}_0^*)) \\
       \chi(x,\xi )  (1\otimes \mathcal{E}_0)  & \widetilde{a}^*(x,\xi)\otimes 1
   \end{array}
 \right):\\
\begin{array}{c}
\widetilde{P}_1 L^2(\mathbb{R}^n_y, (C_0(T^*\mathbb{R}^n)\rtimes G)^+\otimes \mathbb{C}^N\otimes \Lambda^{ev}(\mathbb{C}^n)) \\
\oplus \\
\widetilde{P}_2 L^2(\mathbb{R}^n_y, (C_0(T^*\mathbb{R}^n)\rtimes G)^+\otimes \mathbb{C}^N\otimes\Lambda^{odd}(\mathbb{C}^n))
\end{array}
\longrightarrow\\
\begin{array}{c}
\widetilde{P}_2 L^2(\mathbb{R}^n_y, (C_0(T^*\mathbb{R}^n)\rtimes G)^+\otimes \mathbb{C}^N\otimes\Lambda^{ev}(\mathbb{C}^n)) \\
\oplus \\
\widetilde{P}_1 L^2(\mathbb{R}^n_y, (C_0(T^*\mathbb{R}^n)\rtimes G)^+\otimes \mathbb{C}^N\otimes\Lambda^{odd}(\mathbb{C}^n))
\end{array}
\end{multline}
where $\Pi$ is the orthogonal projection on the subspace  $\ker\mathcal{E}_0=\mathbb{C}e^{-|y|^2/2}$.

Then we have
\begin{multline}
\ker \widetilde{A}=\ker \widetilde{A}^*\widetilde{A}=\\
\ker {\rm diag}\Bigl(
     \widetilde{a}^* \widetilde{a} (x,p)\otimes (1-\Pi)+  \chi^2(x,p )  (1\otimes \mathcal{E}^*_0 \mathcal{E}_0),
 \widetilde{a} \widetilde{a}^*(x,p)\otimes 1 +  \chi^2(x,p )  (1\otimes \mathcal{E}_0 \mathcal{E}^*_0)
\Bigr )
\end{multline}
We claim that the operator 
$$
 (\widetilde{a} \widetilde{a}^*)(x,p)\otimes 1 +  \chi^2(x,p )  (1\otimes \mathcal{E}_0 \mathcal{E}^*_0))
$$
is strictly positive and, hence, invertible. Indeed, this operator is a sum of two nonnegative operators and for $|x|^2+|p|^2\le R^2$ the second summand is strictly positive since $\ker\mathcal{E^*_0}=0$, while 
for $|x|^2+|p|^2\ge R^2$ the first term is strictly positive, since $\widetilde{a} $ is invertible here. One shows similarly that the kernel of operator
$$
  (\widetilde{a}^* \widetilde{a})(x,p)\otimes (1-\Pi)+  \chi^2(x,p )  (1\otimes \mathcal{E}^*_0 \mathcal{E}_0)
$$
is equal to $\im P_1\otimes \ker \mathcal{E}_0\simeq \im P_1$ and this operator is strictly positive on the orthogonal complement of this subspace.  Thus, we have
$$
\ker \widetilde{A}= (\im P_1\otimes \ker \mathcal{E}_0)\oplus 0\simeq \im P_1.
$$
The kernel of the adjoint operator is similarly equal to
$$
\ker \widetilde{A}^*=\ker  \widetilde{A}\widetilde{A}^*=(\im P_2\otimes \ker \mathcal{E}_0)\oplus 0\simeq \im P_2.
$$
Hence, we obtain
$$
 \ind_{(C_0(T^*\mathbb{R}^n)\rtimes G)^+}\widetilde{A}=[\ker\widetilde{A}]-[\ker\widetilde{A}^*]=
[P_1]-[P_2]\in K_0(C_0(T^*\mathbb{R}^n)\rtimes G).
$$
This proves \eqref{q2}.

\paragraph{6. Proof of \eqref{q3}.}
Given $[a]\in K_0(C_0(T^*\mathbb{R}^n)\rtimes G)$, we claim that the element $a\#\sigma(\mathcal{E}_0)$ is homotopic within
elliptic symbols to an element unitarily equivalent to $\sigma(\mathcal{E}_0)\#a$. Indeed, the
homotopy
\begin{equation*}
    \sigma_t=a(x\cos t + y\sin t,p\cos t +q\sin t)\#
          \sigma(\mathcal{E}_0)(y\cos t - x\sin t,q\cos t - p\sin t)
\end{equation*}
for $t\in[0,\pi/2]$ takes $a(x,p)\#\sigma(\mathcal{E}_0)(y,q)$ to
$a(y,q)\#\sigma(\mathcal{E}_0)(-x,-p)$, and then the $180^\circ$ rotation in the
$(x,p)$-plane takes it to the symbol unitarily equivalent to
$\sigma(\mathcal{E}_0)\#a$. Moreover, this homotopy preserves the ellipticity of the symbol, since the diagonal action of $G$ on $\mathbb{R}^{4n}$ commutes  with the rotation homotopy
$$
(x,p,y,q) \longmapsto (x\cos t + y\sin t,p\cos t +q\sin t,y\cos t - x\sin t,q\cos t - p\sin t).
$$

Finally, the following equality holds 
\begin{equation}\label{bott3}
    \ind_{(C_0(T^*\mathbb{R}^n)\rtimes G)^+}[\sigma(\mathcal{E}_0)\#a]=\beta\ind [a].
\end{equation}
The proof of this equality  coincides with the proof of Lemma~6.7 in \cite{NaSaSt17}.

\end{proof}

\section{Proof of the Index Formula}

Both sides of the index formula \eqref{indf1} are homomorphisms of Abelian groups 
$$
 \ind,\ind_t: \Ell(\mathbb{R}^n,G)\longrightarrow \mathbb{C}.
$$
The group  $\Ell(\mathbb{R}^n,G)\simeq K_0(C_0(T^*\mathbb{R}^n)\rtimes G)$ is generated by the stable homotopy classes of twisted Euler operators \eqref{euler3} by the equivariant Bott periodicity (see Theorem~\ref{bott2}). 
Hence, it suffices to prove that the analytic index is equal to the topological index for the twisted Euler operators.

\paragraph{The analytic index of twisted Euler operators.} The cokernel is trivial (this follows from the fact that 
$\mathcal{E}_0\otimes 1_N$ is surjective and commutes with $1\otimes P$), while the kernel
is equal to $P\mathbb{C}^N \exp(-r^2/2)$. Hence,
\begin{equation}\label{eq-z1}
 \ind(\mathcal{E}_0\otimes 1_N,1\otimes P,1\otimes P)=\rk P|_{\mathbb{C}^N \exp(-r^2/2)}=\tr P|_{\mathbb{C}^N \exp(-r^2/2)}= \sum_{g\in G} {\rm tr} P_g=: \sum_{\langle g\rangle \subset G}\ch_g[P].
\end{equation}
Here $\tr$ stands for the operator trace on $L^2(\mathbb{R}^n,\mathbb{C}^N)$,   ${\rm tr}$ is the matrix trace,   $P_g$ are the components of $P\in \Mat_N(C^\infty(G))$, and we used the fact that the Gaussian function $\exp(-r^2/2)$ is $U(n)$-invariant.

\paragraph{The topological index of   twisted Euler operators.}
 Given $g\in G$, let us compute the localized topological index
$\ind_t [\sigma (\mathcal{E}_0\otimes 1_N,1\otimes P,1\otimes P)](g)$.  Let $P_1$ and $P_0$ be matrix projections over $C^\infty(\mathbb{C}^n)\rtimes G$ such that
$$
[\sigma (\mathcal{E}_0\otimes 1_N,1\otimes P,1\otimes P)]=[P_1]-[P_0].
$$
By the definition of the localized topological index, we have
\begin{equation}\label{new5}
 \ind_t [\sigma (\mathcal{E}_0\otimes 1_N,1\otimes P,1\otimes P)](g)=\frac{1}{\det(1-g|_{L_g^\perp})}\sum_{s\in\langle g\rangle}{\rm tr}(\tau_s(\omega_s))=
 \frac{1}{\det(1-g|_{L_g^\perp})}    
\sum_{s\in\langle g\rangle} \int_{L_s } {\rm tr}(\omega_s|_{L_s }), 
\end{equation}
where the functional $\tau_s$ was defined in \eqref{geom1}, $L=\mathbb{C}^n$,  $L_s$ is the fixed-point subspace for $s\in U(n)$, and we set
$$
\omega=\{\omega_s\}_{s\in G}=P_1\exp\left(-\frac{ dP_1dP_1}{2\pi i}\right)-P_0\exp\left(-\frac{ dP_0dP_0}{2\pi i}\right)
\in \Mat_N(C^\infty_c(\mathbb{C}^n,\Lambda(\mathbb{C}^n)) \rtimes G).
$$
We claim that the following equality holds
\begin{equation}\label{newold1}
\sum_{s\in\langle g\rangle} \int_{L_s }{\rm tr} (\omega_s|_{L_s })=
\int_{L_g}  \ch_{g}[\sigma (\mathcal{E}_0\otimes 1_N,1\otimes P,1\otimes P)]
\end{equation}
where $ \ch_{g}[\sigma (\mathcal{E}_0\otimes 1_N,1\otimes P,1\otimes P)]\in H^{ev}_c(L_g)$  is the localized Chern character of the symbol of the twisted Euler operator defined in \cite[p.92]{NaSaSt17}. Indeed, it follows from the definitions in the cited monograph that 
\begin{equation}\label{newold2}
 \ch_{g}[\sigma (\mathcal{E}_0\otimes 1_N,1\otimes P,1\otimes P)]=
\sum_{s\in\langle g\rangle}\;\;\;\int_{ \overline G_{g,s}} {\rm tr} (h^* \omega_s)|_{L_g }dh,
\end{equation}
where $\overline G_{g,s}=kC_g\subset U(n)$, $C_g=\{h\in U(n)\;|\; gh=hg\}$ is the centralizer of $g$ in $U(n)$ (it is a compact Lie group), and $k$ is an arbitrary element such that $kgk^{-1}=s$. Finally, $dh$ is the measure on $\overline G_{g,s}$ induced by the element $k$ from the normalized Haar measure on $C_g$. Integrating \eqref{newold2} over $L_g$ gives us the desired equality:
\begin{multline*}
\int_{L_g}\ch_{g}[\sigma (\mathcal{E}_0\otimes 1_N,1\otimes P,1\otimes P)]=
\sum_{s\in\langle g\rangle}\;\;\;\int_{ \overline G_{g,s}}\int_{L_g} {\rm tr} (h^* \omega_s)|_{L_g }dh\\
=\sum_{s\in\langle g\rangle}\;\;\;\int_{ \overline G_{g,s}}\int_{L_s} {\rm tr} ( \omega_s|_{L_s })dh=
\sum_{s\in\langle g\rangle}\;\;\; \int_{L_s} {\rm tr} ( \omega_s|_{L_s }). 
\end{multline*}
Here we used the fact that each $h\in  \overline G_{g,s}$  defines a diffeomorphism $h:L_g\to L_s$ of the fixed-point sets of $g$ and $s$.
Thus, Eqs. \eqref{new5} and \eqref{newold1} give us the following equality
\begin{equation}\label{alpha1}
\ind_t [ (\mathcal{E}_0\otimes 1_N,1\otimes P,1\otimes P)](g)= \frac{1}{\det(1-g_{L_g^\perp})}  \int_{L_g}  \ch_{g}[\sigma (\mathcal{E}_0\otimes 1_N,1\otimes P,1\otimes P)].
\end{equation}
The localized Chern character is multiplicative (see  \cite[Lemma~9.10]{NaSaSt17})  and we have
\begin{equation}\label{alpha11}
   \ch_{g}[\sigma (\mathcal{E}_0\otimes 1_N,1\otimes P,1\otimes P)]=
  \ch_g[P] \ch i^*(\sigma(\mathcal{E}_0))(g),
\end{equation}
where $\ch_g[P]=\sum_{s\in\langle g\rangle}{\rm tr}P_s\in \mathbb{C}$ and   $i^*(\sigma(\mathcal{E}_0))$ is the restriction of symbol $\sigma(\mathcal{E})_0$ to the subspace $L_g$.

A direct computation shows that the restriction of  the symbol of the Euler operator to the fixed-point set is equal to  
$$
i^*\sigma(\mathcal{E}_0)=(1_{\Lambda^{ev}(L^\perp_{g} )} \otimes \sigma(\mathcal{E}_{L_g}))\oplus (1_{\Lambda^{odd}(L^\perp_{g} )} \otimes \sigma(\mathcal{E}^*_{L_g})),
$$
where $\Lambda^{ev/odd}(L^\perp_{g} )$ are the vector spaces of even/odd algebraic forms of $L^\perp_{g}$, and we denote  
the symbol of the Euler operator on a vector space $L$ by $\sigma(\mathcal{E}_{L})$. 
Now note that the action of $g$ is nontrivial only on the exterior algebra of   $L^\perp_g$. Hence, the localized  Chern character is equal to\footnote{Recall the definition of the localized Chern character for a trivial $G$-space $X$:
$$
\ch(\cdot)(g): K_G(X)\simeq K(X)\otimes R(G) \stackrel{\ch\otimes{\rm tr}_g}\longrightarrow H^*(X)\otimes\mathbb{C},
$$
where $K_G(X)\simeq K(X)\otimes R(G) $ is the natural isomorphism, $R(G)$ is the ring of virtual representations of $G$, $\ch$ is the Chern character, while ${\rm tr}_g:R(G)\to \mathbb{C}$ takes a virtual representation to the value of its character at the element $g\in G$.
}
$$
\ch (i^* \sigma(\mathcal{E}_0) )(g)={\rm tr}_g([\Lambda^{ev}(L^\perp_{g} )(g)]-[\Lambda^{odd}(L^\perp_{g} )] )
\cdot \ch (\sigma(\mathcal{E}_{L_g}))= \det(1-g_{L_g^\perp}) \cdot \ch (\sigma(\mathcal{E}_{L_g})). 
$$
This equality follows from the definition of the localized Chern character and the fact that ${\rm tr}_g([\Lambda^{ev}(L^\perp_{g} )(g)]-[\Lambda^{odd}(L^\perp_{g} )] )= \det(1-g_{L_g^\perp}) $, which is easy to see if we diagonalize $g_{L_g^\perp}$. Substituting the expression for the localized Chern character in \eqref{alpha1}, we obtain
\begin{equation}\label{alpha1s}
\ind_t [\sigma(\mathcal{E}_0\otimes 1_N,1\otimes P,1\otimes P)](g)= 
\ch_g[P] \frac{\det(1-g_{L_g^\perp})}{\det(1-g_{L_g^\perp})}  \int_{L_g}  \ch  (\sigma(\mathcal{E}_{L_g})).
\end{equation}
Here, the determinants cancel, while the integral is well known and is equal to one 
\begin{align*}
 \int_{L_g}  \ch  (\sigma(\mathcal{E}_{L_g}))=\left(\int_{\mathbb{C}} \ch (\sigma(\mathcal{E}_{\mathbb{C}}))\right)^{\dim L_g} =1.
\end{align*}
This equality is a special case of Riemann--Roch formula for the embedding $pt\subset L_g$, see  e.g.~\cite{LuMi1}.
Hence  we obtain the formula for the localized topological index of the twisted Euler operator
$$
\ind_t [\sigma(\mathcal{E}_0\otimes 1_N,1\otimes P,1\otimes P)](g)=\ch_g(P).
$$
Then the topological index itself is equal to
\begin{equation}\label{ind-tt}
 \ind_t [\sigma(\mathcal{E}_0\otimes 1_N,1\otimes P,1\otimes P)] =\sum_{\langle g\rangle \subset G}\ch_g[P].
\end{equation}
Comparing the expressions for the analytic index in \eqref{eq-z1} and the topological index in \eqref{ind-tt} we see that they are equal. The proof of the index formula is now complete.

\end{document}